 \NewDocumentCommand{\INTERVALINNARDS}{ m m }{
 	#1 {,} #2
 }
 \NewDocumentCommand{\interval}{ s m >{\SplitArgument{1}{,}}m m o }
 {
 	\IfBooleanTF{#1}{
 		\left#2 \INTERVALINNARDS #3 \right#4
 	}{
 		\IfValueTF{#5}{
 			#5{#2} \INTERVALINNARDS #3 #5{#4}
 		}{
 			#2 \INTERVALINNARDS #3 #4
 		}
 	}
 }
 \newtheorem{theorem}{Theorem}[section]
 \newtheorem{corollary}{Corollary}[section] 
 \newtheorem{definition}{Definition}[section]
 \newtheorem{lemma}[theorem]{Lemma}
 \newtheorem{prop}[theorem]{Proposition}
\newtheorem{Remark}{Remark}[section]
\begin{document} 
\title{\textbf{The Julia sets of Chebyshev's method with small degrees}}
\author[1]{Tarakanta Nayak\footnote{Corresponding author: tnayak@iitbbs.ac.in}}
\author[1]{Soumen Pal\footnote{sp58@iitbbs.ac.in}}
\affil[1]{\textit{School of Basic Sciences\hspace{9cm}Indian Institute of Technology Bhubaneswar, India}}
\date{}
\maketitle
\begin{abstract}
 Given a polynomial $p$, the degree of its Chebyshev's method $C_p$ is determined. If $p$ is cubic then the degree of $C_p$ is found to be $4,6$ or $7$ and we investigate the dynamics of  $C_p$  in these cases.  If a cubic polynomial $p$ is unicritical or non-generic then, it is proved that the Julia set of $C_p$ is connected. The family of all rational maps arising as the Chebyshev's method  applied to a  cubic polynomial which is non-unicritical and generic is parametrized  by the multiplier of one of its extraneous fixed points. Denoting a member of this family with an extraneous fixed point with multiplier $\lambda $ by  $C_\lambda$, we have shown that the Julia set of $C_\lambda$ is connected  whenever $\lambda \in [-1,1]$. 
\end{abstract}
\textit{Keyword:}
Chebyshev's method;  Extraneous fixed points; Connected Julia sets.\\
 AMS Subject Classification: 37F10, 65H05
 \section{Introduction}
 Finding the roots of a given polynomial is a classical and widely studied topic. A root-finding method is a function that associates a polynomial $p$ with a rational map $F_p$ such that  each root $z$ of $p$ is an   attracting   fixed point  of $F_p$, i.e., $F_p (z)=z$ and $|F_p '(z)|<1$. It is well-known that there is an open connected subset of the extended complex plane $\widehat{\mathbb{C}}$ containing the attracting fixed point such that  every point in this set converges to the attracting fixed point under the iteration of $F_p$. This is how a root of a polynomial can be approximated starting with a suitably chosen point. 
 \par 
  The Fatou set of a rational map $R$, denoted by $\mathcal{F}(R)$ is the set of all points where $\{R^n\}_{n>0}$ is equicontinuous, and its complement in $\widehat{\mathbb{C}}$ is called the Julia set of $R$. The Julia set of $R$ is denoted by $\mathcal{J}(R)$. Complex dynamics is the study of the Fatou set and the Julia set of a given rational map. For an introduction to the subject, one may refer to a book by Beardon\cite{Beardon1991}.  The root-finding methods present themselves as an interesting class of  rational maps from a dynamical point of view. 
  \par   A fixed point $z_0 \in \mathbb{C}$ of a rational map $R$ is called attracting,  repelling or indifferent if the modulus of its multiplier $|R'(z_0)|$ is less than, greater than or is equal to $1$ respectively. The fixed point $z_0$ is called superattracting if  $R'(z_0)=0$.  If $\infty$ is a fixed point of $R$ then its multiplier is defined as $|h'(0)|$ where $h(z)=\frac{1}{R(\frac{1}{z})}$ and is called attracting, repelling or indifferent accordingly. The basin of attraction of an attracting fixed point $z_0$, denoted by $\mathcal{A}_{z_0}$ is the set $\{z \in \widehat{\mathbb{C}}: \lim_{n \to \infty} R^{n}(z)=z_0\}$. This is an open set and is not necessarily connected. The component of $\mathcal{A}_{z_0}$ containing $z_0$ is called the immediate attracting  basin of $z_0$. An indifferent fixed point is called rationally indifferent or parabolic if its multiplier is a root of unity. The basin of a parabolic fixed point $z_0$ of a rational map $R$ is the set $\{z \in \widehat{\mathbb{C}} \setminus \mathcal{J}(R): \lim_{n \to \infty} R^{n}(z)=z_0\}$. Every component of this basin contains $z_0$ on its boundary and is called an immediate parabolic basin. It is important to note that any point whose iterated image is $z_0$ is not in the basin of the parabolic fixed point $z_0$.  The basin of an attracting or a parabolic fixed point  is in the Fatou set. The immediate attracting basin or the parabolic basin corresponding to a periodic point of period $p$ can be defined accordingly and is a $p$-periodic Fatou component (maximally connected subset of the Fatou set which is invariant under $R^p$). The only other possible periodic Fatou component is a Siegel disk or a Herman ring. The details can be found in~\cite{Beardon1991}.
 
\par  One of the widely discussed root-finding methods is the Newton method and that is the first member of a family known as the Konig’s method. A systematic study of Konig's method is done by Buff and Henriksen in~\cite{BuffHenriksen2003}. A fixed point of a root-finding method $F_p$ is called extraneous if it is not a root of $p$. An important aspect of Konig's method is that all its extraneous fixed points are repelling. 
 This article is concerned with a root-finding method for which an extraneous fixed point can be non-repelling.

  \par 
   For a non-constant, non-linear polynomial $p$, its Chebyshev's method is defined as the rational map
$$
  C_{p}(z)=z-(1+\frac{1}{2}L_{p}(z))\frac{p(z)}{p'(z)},
  $$
  where
  $$
  L_{p}(z)=\frac{p(z)p''(z)}{p'(z)^{2}}.
 $$
  This is  a third order convergent method, i.e., its local degree  (made precise in the following paragraph) is three at each simple root of the polynomial $p$.
  Note that for a monomial or for a linear polynomial $p$, $C_p$   is a linear polynomial. We donot consider this trivial situation and are concerned with  polynomials with degree at least two and which are not monomials.
  \par
  The degree of a rational map is the first thing one needs to know for investigating its dynamics.
  While discussing a number of basic properties of $C_p$, the authors in \cite{GV2020} mention that the degree of $C_p$ is at most $3d-2$ where $d$ is the degree of the polynomial $p$. We found that the exact degree of $C_p$ depends not only  on the number of distinct roots of $p$ but on certain types of its critical points also. We determine the exact degree of $C_p$ for every $p$. Some discussion and definitions are required to state this result. 
  If  a rational map $R$ is analytic at a point $z_0$ and its Taylor series about $z_0$ is $a_k (z-z_0)^k +a_{k+1} (z-z_0)^{k+1}+\cdots $ for some $k > 0$ where $a_k \neq 0$ then we say the local degree of $R$ at $z_0$,  denoted by $\deg(R, z_0)$ is $k$. The map $R$ is \textit{like} $z \mapsto z^k$ near $z_0$. The local degree of $R$ at $\infty$ or at a pole is defined by a change of coordinate using $z \mapsto \frac{1}{z}$. More precisely, if $R(\infty)$ is finite then $\deg(R,\infty)$ is defined as the degree of $R(\frac{1}{z})$ at $0$. If $R(\infty)=\infty$ then $\deg(R,\infty)$ is  defined as $\deg(\frac{1}{R(\frac{1}{z})},0)$. Similarly, if $z_0$ is a pole of $R$ then its local degree at $z_0$ is defined to be  $\deg(\frac{1}{R(z)}, z_0)$. A root $\tilde{z}$ of a rational map $R$ is said to have multiplicity $k$ if $R(\tilde{z})=R'(\tilde{z})=\cdots R^{(k-1)}(\tilde{z})=0 $, but $R^{(k)}(\tilde{z}) \neq 0$ i.e., the local degree of $R$ at $\tilde{z}$ is $k$. 
  A root is called simple if its multiplicity is $1$. It is called multiple if its multiplicity is at least two. A root with multiplicity exactly equal to two is called a double root. A point $z \in \widehat{\mathbb{C}}$ is called  a critical point of a rational map $R$ if $\deg(R,z) \geq 2$. In particular, multiple roots and multiple poles  are critical points.  By definition, the multiplicity of a critical point $z$ of $R$ is $\deg(R,z)-1$. A critical point is called simple if its multiplicity is one.
   \begin{definition}(Special critical point)
  	For a polynomial $p$, a critical point $c\in \mathbb{C}$ is called  special if $p(c) \neq 0$ but $p''(c)=0$. 
  \end{definition}
  A  finite critical point with multiplicity at least $2$ and which is not a root is a special critical point.
  For example, $0$ is a special critical point of $p(z)=z^d+b$ whenever $d\geq 3$ and $b \neq 0$. But it is not so for $b=0$.
 We now present the first result of this article. 
   \begin{theorem}(Degree of $C_p$)
  	Let $p$ be a polynomial of degree $d$. Let $m, n$ and $r$ denote the number of its distinct simple roots, double roots and  roots of multiplicity bigger than $2$ respectively. If $p$ has $s$ number of distinct special critical points then  
  	$$\deg(C_p) =3(m+n+r)-2-B+s$$ where $B$ is the sum of multiplicities of all the special critical points.
  	If $p$ has no special critical point  then
  	$ 	\deg(C_p)=3(m+n+r)-2$.
  	\label{degree}
  \end{theorem}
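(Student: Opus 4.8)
The plan is to write $C_p$ as a single quotient of polynomials, determine the degrees of the numerator and denominator before cancellation, and then account exactly for the common factors. Clearing denominators in the definition gives $C_p = N/D$ with
\[
N(z) = 2z\,p'(z)^3 - 2p(z)p'(z)^2 - p(z)^2 p''(z), \qquad D(z) = 2p'(z)^3 .
\]
Immediately $\deg D = 3(d-1) = 3d-3$. Writing $p(z) = a_d z^d + \cdots$, each of the three summands of $N$ has degree $3d-2$, and a short computation of the leading coefficients gives $a_d^3\, d(d-1)(2d-1)$ for the coefficient of $z^{3d-2}$; since $d \ge 2$ this is nonzero, so $\deg N = 3d-2$. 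Because $\deg N - \deg D = 1$, after writing $C_p = N_0/D_0$ in lowest terms we have $\deg N_0 > \deg D_0$, hence $\deg(C_p) = \deg N_0 = 3d - 2 - \deg\gcd(N,D)$. The whole problem therefore reduces to computing $\deg\gcd(N,D)$.

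Since $D = 2p'^3$, the only possible common factors sit at the roots of $p'$, i.e. the critical points of $p$ in $\mathbb{C}$, and I would partition these into three types and compute $\mathrm{ord}_\gamma N$ and $\mathrm{ord}_\gamma D$ at each such $\gamma$, the cancellation there being $\min(\mathrm{ord}_\gamma N,\mathrm{ord}_\gamma D)$. If $\gamma$ is a root of $p$ of multiplicity $\mu \ge 2$, then $D$ has order $3(\mu-1)$ at $\gamma$; since $L_p$ is finite there (one checks $L_p(\gamma) = (\mu-1)/\mu$) and $p/p'$ vanishes there, $C_p$ is analytic at $\gamma$ with $C_p(\gamma)=\gamma$, so the entire factor of $D$ cancels and the cancellation is $3(\mu-1)$. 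If $\gamma$ is a special critical point of critical multiplicity $\nu \ge 2$ (so $p(\gamma)\neq 0$, while $p'$ has a root of order $\nu$ and $p''$ a root of order $\nu-1$ at $\gamma$), then among the three summands of $N$ the term $-p^2 p''$ has the strictly smallest order $\nu-1$, so $\mathrm{ord}_\gamma N = \nu - 1$ exactly while $\mathrm{ord}_\gamma D = 3\nu$; the cancellation is $\nu - 1$ and $\gamma$ survives as a pole of order $2\nu+1$. Finally, if $\gamma$ is a simple critical point with $p(\gamma)\neq 0$ (so $\nu = 1$, $p''(\gamma)\neq 0$), the same estimate gives $\mathrm{ord}_\gamma N = 0$ and no cancellation.

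Summing the contributions yields $\deg\gcd(N,D) = 3\sum_{\mu_j \ge 2}(\mu_j - 1) + \sum_{\text{special}}(\nu_i - 1)$. Using $d = m + 2n + \sum_{\mathrm{mult}>2}\mu^{(l)}$ one rewrites $\sum_{\mu_j\ge 2}(\mu_j-1) = d - m - n - r$, and by definition $\sum_{\text{special}}(\nu_i - 1) = B - s$. Substituting into $\deg(C_p) = 3d - 2 - \deg\gcd(N,D)$ collapses to $3(m+n+r) - 2 - B + s$, and the case with no special critical point is the specialization $B = s = 0$.

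I expect the main obstacle to be the order bookkeeping for the critical points and, relatedly, the degenerate case $\gamma = 0$: when a critical point sits at the origin the extra factor $z$ in the first summand $2zp'^3$ raises its order, so at a multiple root all three summands of $N$ momentarily share the order $3\mu - 2$. This is precisely why, for roots of $p$, I would deduce the cancellation from the analyticity of $C_p$ at its fixed points rather than by comparing leading coefficients, since that argument is insensitive to whether $\gamma = 0$. For a special critical point the dominant term $-p^2p''$ keeps the strictly least order even when $\gamma = 0$, so the count is unaffected, and I would note this explicitly to close the gap. The only remaining care is to confirm that the claimed vanishing orders are exact rather than mere lower bounds, which holds because in each case the minimal-order contribution comes from a single summand and hence cannot cancel.
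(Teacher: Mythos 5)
Your proof is correct, but it takes a genuinely different route from the paper's. You reduce the single fraction $C_p=N/D$ with $N=2zp'^3-2pp'^2-p^2p''$, $D=2p'^3$, and compute $\deg\gcd(N,D)$ by comparing the orders of vanishing of the three summands of $N$ at each root of $p'$; at multiple roots of $p$ you deduce the exact cancellation $3(\mu-1)$ from the analyticity of $C_p$ at its fixed point rather than from leading terms, which neatly disposes of the degeneracy at $\gamma=0$ that you flag. The paper instead invokes the holomorphic fixed-point count: a degree-$k$ rational map has $k+1$ fixed points with multiplicity, $\infty$ and the $m+n+r$ roots of $p$ are simple fixed points (their multipliers are not $1$), and the remaining fixed points are precisely the solutions of $L_p(z)=-2$ with matching multiplicities; this gives $\deg(C_p)=(m+n+r)+\deg(L_p)$, and $\deg(L_p)$ is then obtained by cancelling in $pp''/(p')^2$ the factors coming from multiple roots (which cancel completely) and from special critical points (where a numerator factor of order $b_j-1$ meets a denominator factor of order $2b_j$). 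Both arguments track the same local data at the critical points of $p$, but yours is purely algebraic and self-contained, while the paper's trades the three-term order comparison in $N$ for the simpler two-factor function $L_p$, at the cost of importing the fixed-point count and checking simplicity of the non-extraneous fixed points.
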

	If $p$ is generic then $m=d$ and $n=r=0$, and
we have an immediate consequence.
\begin{corollary}
	If $p$ is generic then
	\begin{equation*}
	deg(C_p)=
	\begin{cases}3d-2-B+s~~\text{if $p$ has $s$ many special critical points with total multiplicy $B$}\\
	3d-2~~\text{if $p$ has no special critical point.} 
	\end{cases}
	\end{equation*}
\end{corollary}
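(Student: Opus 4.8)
The plan is to deduce the statement directly from Theorem~\ref{degree}, since the corollary is purely a matter of specializing the counting parameters $m$, $n$ and $r$ to the generic case. The only thing requiring justification is the assertion made just before the statement, namely that a generic polynomial $p$ of degree $d$ satisfies $m=d$ and $n=r=0$; once this reduction is in place, the theorem does all of the work.

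First I would make the meaning of \emph{generic} explicit in this setting: all $d$ roots of $p$ are distinct and simple, so that $p$ has neither double roots nor roots of multiplicity exceeding two. Since $m$, $n$ and $r$ are defined respectively as the numbers of distinct simple roots, double roots, and roots of multiplicity bigger than two, this description yields $m=d$, $n=0$ and $r=0$ immediately, and hence $m+n+r=d$.

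Next I would substitute $m+n+r=d$ into the two formulas furnished by Theorem~\ref{degree}. When $p$ has special critical points the theorem gives $\deg(C_p)=3(m+n+r)-2-B+s=3d-2-B+s$, where $s$ is the number of distinct special critical points and $B$ the sum of their multiplicities; when $p$ has none it gives $\deg(C_p)=3(m+n+r)-2=3d-2$. These are precisely the two cases recorded in the corollary.

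As every step is a direct substitution, I do not expect any genuine obstacle here. The only point worth isolating is the reduction ``$p$ generic $\Rightarrow m=d,\ n=r=0$''; everything after that is an evaluation of the formula already established in Theorem~\ref{degree}, which is why the paper flags the result as an immediate consequence.
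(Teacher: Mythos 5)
Your proposal is correct and follows exactly the route the paper intends: the paper itself derives this corollary in one line by noting that genericity forces $m=d$ and $n=r=0$ and then substituting into Theorem~\ref{degree}. Nothing is missing.
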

The following corollary deals with some other special situations. 
\begin{corollary}
	\begin{enumerate}
		\item  If $p$  has two distinct roots then $\deg(C_p) = 4$.  In all other cases, $\deg(C_p) \geq 6$. In particular, there is no polynomial $p$ such that $\deg(C_p) =5$.
		\item If  $\deg(p)=d$ and all its critical points  are special then $\deg(C_p) =2d+s-1$ where $s$ is the number of distinct special critical points of $p$. Further, if $p$ is unicritical then $\deg(C_p) =2d$. 
	\end{enumerate}
\label{degree-corollary}
\end{corollary}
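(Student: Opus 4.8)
The plan is to derive the entire corollary from Theorem~\ref{degree}, so that everything reduces to controlling the single quantity $B-s$, where $B$ is the total multiplicity of the special critical points and $s$ their number. Writing $N=m+n+r$ for the number of distinct roots of $p$, Theorem~\ref{degree} reads $\deg(C_p)=3N-2-(B-s)$. The first thing I would do is record the standing convention that excludes monomials: a polynomial with a single distinct root is $a(z-\alpha)^d$, for which $N=1$, no critical point is a special one (the only critical point $\alpha$ is a root), and the formula gives $\deg(C_p)=1$. This is the trivial linear case we set aside, so the relevant polynomials all have $N\geq 2$.

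The key step, and the only part that needs an idea rather than bookkeeping, is a counting argument bounding $B$. Since $\deg p'=d-1$, the critical points of $p$ have total multiplicity $d-1$, and a root of $p$ of multiplicity $e$ contributes exactly $e-1$ to this count. Hence the critical points lying at roots of $p$ account for total multiplicity $\sum_{\text{roots}}(e-1)=d-N$, so the critical points of $p$ that are \emph{not} roots, which is precisely where special critical points can occur since a special point satisfies $p(c)\neq 0$, have total multiplicity $(d-1)-(d-N)=N-1$. Therefore $B\leq N-1$. Each special critical point has multiplicity at least $2$ because $p'(c)=p''(c)=0$ there; subject to the budget $B\leq N-1$, the difference $B-s$ is largest when all the available multiplicity is concentrated in one special point, giving $B-s\leq (N-1)-1=N-2$ whenever a special point is present, and $B-s=0$ otherwise.

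With this bound the first statement is immediate. When $N=2$ the non-root critical multiplicity equals $N-1=1$, too small to support a special point (which needs multiplicity $\geq 2$), so $B=s=0$ and $\deg(C_p)=3\cdot 2-2=4$. When $N\geq 3$ I substitute $B-s\leq N-2$ into the formula to get $\deg(C_p)=3N-2-(B-s)\geq 3N-2-(N-2)=2N\geq 6$. Combining these with the excluded case $N=1$ shows that the attainable values cannot equal $5$: the possibilities are $\deg(C_p)=4$ (when $N=2$) and $\deg(C_p)\geq 6$ (when $N\geq 3$), with $N=1$ trivial.

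For the second statement, the hypothesis that every critical point of $p$ is special forces no critical point to be a root, so $p$ has only simple roots; hence $m=N=d$, $n=r=0$, and all of the critical multiplicity $d-1$ is special, giving $B=d-1$. Theorem~\ref{degree} then yields $\deg(C_p)=3d-2-(d-1)+s=2d-1+s=2d+s-1$. If in addition $p$ is unicritical, its unique critical point is the only special point, so $s=1$ and $\deg(C_p)=2d$. I expect no genuine obstacle beyond the counting lemma of the second paragraph; the only care needed is to check that the degenerate monomial-type cases are exactly those removed by the standing hypotheses, so that a non-monomial unicritical $p$ with $d\geq 3$ really does have a special (rather than root) critical point, and hence $s=1$.
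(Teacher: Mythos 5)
Your argument is correct and follows essentially the same route as the paper: both reduce everything to Theorem~\ref{degree} together with the bound $B\le m+n+r-1$ on the total multiplicity of the special critical points (the paper extracts it from $\deg(g)=m+n+r-1$, you from $\deg(p')=d-1$ minus the root contributions $d-N$, which is the same count), and both then use $s\ge 1$ to conclude $\deg(C_p)\ge 2(m+n+r)\ge 6$. The only cosmetic differences are that you obtain part (2) by substituting $B=d-1$ directly into Theorem~\ref{degree} instead of recomputing $\deg(L_p)$ as the paper does, and that the paper explicitly settles the unicritical case $d=2$ (where the unique critical point is \emph{not} special) by appealing to part (1), a one-line case you should record as well.
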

The degree of  the Chebyshev's method applied to a cubic polynomial is found to be $4,6$ or $7$. The remaining part of this article focusses on the dynamics of the Chebyshev's method in these cases. 
\par The dynamics of the Chebyshev's method for quadratic polynomials has been investigated by Kneisl  in~\cite{Kneisl2001} who calls this as Super-Newton method. The author gives examples of cubic polynomial whose Chebyshev's method has a superattracting extraneous fixed point. Olivo et al. \cite{Olivo2015}  consider a one parameter family of cubic polynomials and study their Chebyshev's method.  The case of cubic polynomials are also studied in \cite{GV2020} where the authors have discussed the cubic polynomials whose Chebyshev's method has attracting extraneous fixed points  and attracting periodic points.   In all these results, the connectedness of the Julia set remains unexplored. 
\par
 For a polynomial $p$, the  Chebyshev-Halley method of order $\sigma$ is given by
$
H^{\sigma}_p(z)=z-\left[1+\frac{1}{2}\dfrac{p(z)p''(z)}{(p'(z))^2-\sigma p(z)p''(z)}\right]\frac{p(z)}{p'(z)}
$
where $\sigma \in \mathbb{C}.$
 The Chebyshev's method is a special member of the  Chebyshev-Halley family.
In fact, $C_p =H_p ^0.$  The dynamics of the  Chebyshev-Halley family applied to unicritical polynomials $z \mapsto z^n -1, n \in \mathbb{N}$ is investigated in \cite{CCV2020}. A necessary and sufficient condition for disconnected Julia sets is found. More precisely, it is proved that the Julia set of these root-finding methods is disconnected if and only if the immediate basin of $1$ (which is a superattracting fixed point corresponding to a root of the polynomial) contains a critical point but no pre-image of $1$ other than itself. The numerical study done in this paper suggests the existence of disconnected Julia set for the Chebyshev-Halley method $H_{p}^{\sigma}$ for several values of $\sigma$ when $p$ is a cubic or higher degree unicritical polynomial. However, the paper does not contain any theoretical proof for this statement. 
\par  Though the Julia set of Newton method (applied to a polynomial) is always connected~\cite{Shishikura2009}, there are other members of Konig's methods with a disconnected Julia set~\cite{Honorato2013}. The Chebyshev's method applied to non-generic cubic polynomials is dealt in \cite{Olivo2015}, where   the connectivity question of their Julia sets remain to be answered.

\par We prove that the Julia set of  Chebyshev's method of each cubic polynomial that are either unicritical or non-generic is connected.  This is Proposition~\ref{lambda=5and6} of this article. In fact, this proposition shows that the Fatou set of $C_p$ is the union of the immediate superattracting basins (and their pre-images) corresponding to the three roots of $p$ when $p$ is unicritical whereas for non-generic $p$, the Fatou set of $C_p$ is the union of the two immediate attracting basins (and their pre-images) corresponding to the two roots of $p$.
\par 
As noted earlier,  the existence of an attracting or a rationally indifferent extraneous fixed point is a new feature of the Chebyshev's method. Its dynamical  relevance is revealed by parametrizing all the cubic polynomials in terms of the multiplier of an extraneous fixed point of its Chebyshev's method. This is done in Lemma \ref{characterization}. More precisely, it is shown that for every $\lambda \in \mathbb{C}\setminus \{5,6\}$, if the Chebyshev's method of a  cubic, generic and non-unicritical polynomial has an extraneous fixed point with multiplier $\lambda$ then it is conjugate to the Chebyshev's method $C_\lambda$ of $p_\lambda(z)=z^3+3z+\frac{3\lambda^2-39\lambda+124}{(5-\lambda)\sqrt{5-\lambda}}$ where the principal branch of the square root $\sqrt{5-\lambda}$ is considered. The case $\lambda=5$ is possible when $p$ is unicritical whereas there is no cubic polynomial which has an extraneous fixed point with multiplier equal to $6$ (Remark~\ref{5-6}(2)). Then we study the dynamics of $C_\lambda$ for $\lambda \in [-1,1]$ and prove the following.
 \begin{theorem}\label{connected_J_set}
 	For $-1 \leq \lambda \leq 1$, the Julia set of $C_\lambda$ is connected.
 	\label{connected}
 \end{theorem}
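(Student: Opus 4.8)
The plan is to prove connectivity by showing that every Fatou component of $C_\lambda$ is simply connected; by a standard topological fact this is equivalent to $\mathcal{J}(C_\lambda)$ being connected, since a disconnection of the Julia set would force some Fatou component to separate the two pieces and hence be multiply connected, while conversely all complementary components of a continuum in $\widehat{\mathbb{C}}$ are simply connected. So the real content is a complete description of the Fatou set together with a connectivity count on each component.

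First I would record the global data. Since $p_\lambda(z)=z^3+3z+c$ has derivative $3(z^2+1)$, its critical points are $\pm i$, and $p_\lambda''(\pm i)=\pm 6i\neq 0$, so $p_\lambda$ has no special critical point; moreover its discriminant $-27(4+c^2)$ is nonzero for real $c$, so $p_\lambda$ has three distinct simple roots. Hence $m=3$, $n=r=s=0$, and Theorem~\ref{degree} gives $\deg(C_\lambda)=3\cdot3-2=7$, so $C_\lambda$ carries $2\cdot7-2=12$ critical points counted with multiplicity. Each of the three roots is a superattracting fixed point at which $C_\lambda$ has local degree $3$, accounting for $3\times2=6$ of these; the remaining $6$ are free. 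I would locate these free critical points by factoring $C_\lambda'$ and identify the extraneous fixed point whose multiplier is $\lambda$: for $\lambda\in(-1,1)$ it is attracting and for $\lambda=\pm1$ it is parabolic, so it always carries a non-repelling basin. Because the coefficients of $p_\lambda$ are real for real $\lambda$, $C_\lambda$ commutes with complex conjugation, $\mathcal{J}(C_\lambda)$ is symmetric about $\mathbb{R}$, and the free critical points occur in conjugate pairs or on $\mathbb{R}$, which I would exploit to halve the number of orbits to track.

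Next I would classify the Fatou components. The heart of the argument is to show that every free critical orbit converges to one of the three superattracting roots or to the non-repelling extraneous fixed point. Granting this, the count of non-repelling cycles against critical points forbids any further attracting or parabolic cycle, and since the accumulation set of the postcritical set is then a finite set of fixed points, no Siegel disk or Herman ring can exist (its boundary would be an infinite subset of the postcritical closure). Combined with the absence of wandering domains, this shows the Fatou set is exactly the union of the immediate basins of these at most four fixed points together with all their preimages. Each immediate basin $U$ is then shown to be simply connected by Riemann--Hurwitz: a forward-invariant immediate basin on which $C_\lambda$ is proper of degree $k$ satisfies $\delta_U=(k-1)\chi(U)$, where $\delta_U$ is the critical multiplicity inside $U$, and the explicit critical count forces $\chi(U)=1$ (for $\lambda=\pm1$ the parabolic case is handled by the analogous count on the attracting petals). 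Finally, for a component $U'$ of $C_\lambda^{-1}(V)$ with $V$ a simply connected component, Riemann--Hurwitz gives $\chi(U')=\deg(C_\lambda|_{U'})-\delta_{U'}$, and the bookkeeping of where the $12$ critical points sit shows each $U'$ carries exactly $\deg(C_\lambda|_{U'})-1$ of them, so $U'$ is again simply connected; inducting over preimages then shows every Fatou component is simply connected.

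The main obstacle is the critical-orbit control for \emph{all} $\lambda\in[-1,1]$ simultaneously: one must show that each conjugation-symmetric free critical point is captured by the basin of a root or of the extraneous fixed point, uniformly along the whole segment, and in particular that no free critical point lands in a preimage component in a way that would push its critical count above $\deg(C_\lambda|_{U'})-1$. I expect this to require an explicit description of the free critical points as functions of $\lambda$ together with estimates, or a continuation/monotonicity argument along $[-1,1]$, showing the orbits remain trapped. The parabolic endpoints $\lambda=\pm1$ will need separate treatment, since there the immediate basins meet at the fixed point and the Riemann--Hurwitz bookkeeping must be carried out petal by petal.
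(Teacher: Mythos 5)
Your outline takes a genuinely different and much heavier route than the paper, and as written it has two load-bearing gaps. The paper does not classify all Fatou components at all: it proves (Lemma~\ref{poleinboundary}) that if one unbounded, simply connected, invariant immediate basin has \emph{all} the poles on its boundary, then the Julia set is connected, and then verifies these three properties for the single immediate basin $\mathcal{A}_\lambda$ of the real root $r_\lambda$ (unboundedness from real-line dynamics, both poles on the boundary from the first part of that lemma plus conjugation symmetry). Your plan instead requires controlling every critical orbit, excluding rotation domains, and running Riemann--Hurwitz over all preimage components; that is a legitimate general strategy, but you leave its hardest ingredients unexecuted, explicitly deferring the critical-orbit control to unspecified ``estimates or a continuation/monotonicity argument.'' In fact you have not located the critical points: of the $12$, six sit at the roots, \emph{four} are at the poles $\pm i$ (each of multiplicity two), which lie in the Julia set and map to the repelling fixed point $\infty$ --- so they do not ``converge to a root or to the extraneous fixed point'' --- and only $\pm\frac{1}{\sqrt{5}}$ are free critical points in the Fatou set. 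Their fates are settled by counting ($-\frac{1}{\sqrt 5}$ must serve the attracting/parabolic basin of $-\frac{1}{\sqrt{5-\lambda}}$) and by the real estimate $C_\lambda(\frac{1}{\sqrt5})<r_\lambda$, so this part is easier than you fear, but it still has to be done.

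The second gap is more serious: Riemann--Hurwitz does not by itself prove that a forward-invariant immediate basin is simply connected. The identity $(k-1)\chi(U)=\delta_U$ forces $\chi(U)=1$ only when the connectivity is finite, and immediate attracting basins can be infinitely connected; the standard criterion (Milnor, Theorem~9.3) gives simple connectivity when the immediate basin contains \emph{no critical point other than the fixed point itself}. For $\mathcal{A}_\lambda$ this is exactly the delicate issue: the critical \emph{value} $C_\lambda(\frac{1}{\sqrt5})$ lies in $(-\infty,r_\lambda)\subset\mathcal{A}_\lambda$, so one must show that the critical \emph{point} $\frac{1}{\sqrt5}$ sits in a strict preimage component rather than in $\mathcal{A}_\lambda$ itself. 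The paper spends the bulk of Lemma~\ref{immediatebasin-realfixedpoint} on precisely this, via a B\"ottcher-coordinate and proper-map contradiction argument; your proposal does not engage with it. Until you either carry out that exclusion or supply another mechanism ruling out infinite connectivity of the invariant basins (and, for $\lambda=\pm1$, of the parabolic basin), the proof is incomplete.
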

The proof of the theorem in fact describes the dynamics of $C_\lambda$ completely. The Fatou set of $C_\lambda$ is the union of the supreattracting immediate basins corresponding to the three roots of $p_\lambda$ and the immediate basin of the extraneous fixed point  with multiplier $\lambda$ - this is attracting if $\lambda \in (-1,1)$ and rationally indifferent if $\lambda=\pm 1$.
\par   
Though both $C_{-1}$ and $C_{1}$ have a rationally indifferent extraneous fixed point they differ in terms of the number of extraneous fixed points. The number of extraneous fixed point of $C_{-1}$ is four whereas it is three for $C_{1}$. A fixed point of a rational map $R$ is called multiple, with multiplicity $k \geq 2$ if it is a multiple root of $R(z)-z=0$ with multiplicity $k$. Otherwise, it is called simple. Here $C_1$ has a multiple fixed point whereas all the fixed points of $C_{-1}$ are simple. 
\par 
The images of the Julia set of $C_{\lambda}$ for $\lambda=-1,0$ and $1$ are given in Figure~\ref{Cheby-$-1$}, Figure~\ref{Cheby-$0$} and Figure~\ref{Cheby-$1$} respectively.  In each figure, the three immediate basins corresponding to the roots of the polynomial are given in blue, green and pink. The immediate basin of the extraneous attracting (for $C_{0}$) and parabolic (for $C_{-1}$ and $C_{1}$) fixed point  is indicated in red whose  zoomed version is also given in each figure.

\begin{figure}[h!]
	\begin{subfigure}{.5\textwidth}
		\centering
		\includegraphics[width=0.9\linewidth]{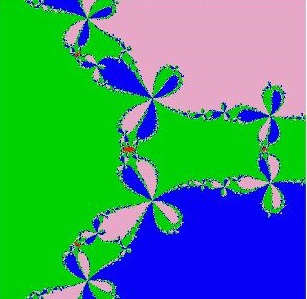}
		\caption{Periodic Fatou components}
	\end{subfigure}
	\begin{subfigure}{.5\textwidth}
		\centering
		\includegraphics[width=0.9\linewidth]{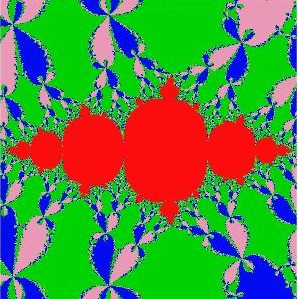}
		\caption{The immediate parabolic basin }
		
	\end{subfigure}
	\caption{The Julia set of $C_{-1} $ }
	\label{Cheby-$-1$}
\end{figure}
\begin{figure}[h!]
	\begin{subfigure}{.5\textwidth}
		\centering
		\includegraphics[width=1\linewidth]{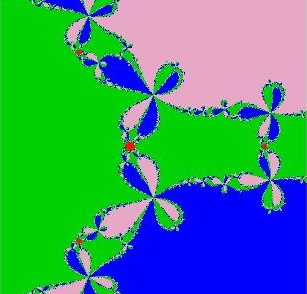}
		\caption{Periodic Fatou components}
	\end{subfigure}\hspace{0.05cm}
	\begin{subfigure}{.5\textwidth}
		\centering
		\includegraphics[width=1\linewidth]{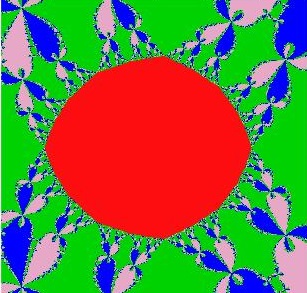}
		\caption{The immediate superattracting  basin}
	\end{subfigure}
	\caption{The Julia set of $C_0$}
	\label{Cheby-$0$}
\end{figure}
\begin{figure}[h!]
	\begin{subfigure}{.5\textwidth}
		\centering
		\includegraphics[width=0.9\linewidth]{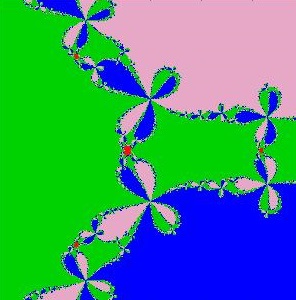}
		\caption{Periodic Fatou components}
	\end{subfigure}\hspace{0.0cm}
	\begin{subfigure}{.5\textwidth}
		\centering
		\includegraphics[width=0.9\linewidth]{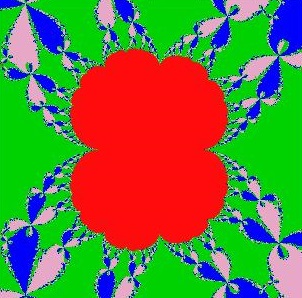}
		\caption{The immediate parabolic basin }
		
	\end{subfigure}
	\caption{The Julia set of $C_1 $ }
	\label{Cheby-$1$}
\end{figure}
\par 
Section 2 describes some useful properties of the Chebyshev's method. The degree of this method is determined in Section 3. The dynamics of $C_\lambda, -1 \leq \lambda \leq 1$ is investigated in Section 4.
 
 \section{Preliminaries}
 \subsection{Some properties of the Chebyshev's method }
Recall that, for a polynomial $p$,
 $$
C_{p}(z)=z-(1+\frac{1}{2}L_{p}(z))\frac{p(z)}{p'(z)},
$$
and  the derivative of the Chebyshev's method is given by
 \begin{equation}\label{deri}
 C_p'(z)=\frac{L_{p}(z)^2}{2}(3-L_{p'}(z))
 \end{equation}
 where  $ L_{p}(z)=\frac{p(z)p''(z)}{p'(z)^{2}}
$ and  $L_{p'}(z)=\frac{p'(z)p'''(z)}{p''(z)^2}.$
 
 Two rational maps $R,S$ are conformally conjugate, in short conjugate if there is a mobius map $\phi$ such that $S=\phi \circ R \circ \phi^{-1}$. Here $\circ$ denotes the composition of functions. Since $S^n=\phi \circ R^n \circ \phi^{-1}$ for all $n$, the iterative behaviour of $R$ and $S$ are essentially the same. More precisely, we have the following.
 \begin{lemma}[Theorem 3.1.4~\cite{Beardon1991}]
 If $S$ and $R$ are two rational maps such that $S=\phi \circ R \circ \phi^{-1}$ for a mobious map $\phi$ then $\mathcal{J}(S)=\phi (\mathcal{J}(R))$
 \end{lemma}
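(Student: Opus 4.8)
The plan is to prove the equivalent statement $\phi(\mathcal{F}(R)) = \mathcal{F}(S)$ for the Fatou sets and then pass to complements. Since $\phi$ is a Möbius map, it is a homeomorphism of $\widehat{\mathbb{C}}$, so $\phi(\widehat{\mathbb{C}} \setminus \mathcal{F}(R)) = \widehat{\mathbb{C}} \setminus \phi(\mathcal{F}(R))$; once the Fatou sets are matched, the relations $\mathcal{J}(R) = \widehat{\mathbb{C}} \setminus \mathcal{F}(R)$ and $\mathcal{J}(S) = \widehat{\mathbb{C}} \setminus \mathcal{F}(S)$ give $\phi(\mathcal{J}(R)) = \mathcal{J}(S)$ at once. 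The only genuine content therefore lies in the equivalence between the equicontinuity of $\{R^n\}$ at a point $z_0$ and that of $\{S^n\}$ at $\phi(z_0)$.

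First I would record the iterate identity $S^n = \phi \circ R^n \circ \phi^{-1}$, which follows by induction from $S = \phi \circ R \circ \phi^{-1}$ since the inner $\phi^{-1}$ and outer $\phi$ of consecutive factors cancel. All equicontinuity is measured in the spherical (chordal) metric $d$ on $\widehat{\mathbb{C}}$, and the key preliminary observation is that a Möbius map, being continuous on the compact space $\widehat{\mathbb{C}}$, is uniformly continuous with respect to $d$, and likewise for $\phi^{-1}$.

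The main step is the inclusion $\phi(\mathcal{F}(R)) \subseteq \mathcal{F}(S)$. Fix $z_0 \in \mathcal{F}(R)$ and set $w_0 = \phi(z_0)$; I would show $\{S^n\}$ is equicontinuous at $w_0$. Given $\epsilon > 0$, uniform continuity of $\phi$ supplies $\eta > 0$ with $d(a,b) < \eta \Rightarrow d(\phi(a), \phi(b)) < \epsilon$. Equicontinuity of $\{R^n\}$ at $z_0$ then yields $\delta > 0$ such that $d(u, z_0) < \delta \Rightarrow d(R^n(u), R^n(z_0)) < \eta$ for all $n$. Finally, continuity of $\phi^{-1}$ at $w_0$ gives $\delta' > 0$ with $d(w, w_0) < \delta' \Rightarrow d(\phi^{-1}(w), z_0) < \delta$. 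Chaining these with $u = \phi^{-1}(w)$ and using $S^n(w) = \phi(R^n(\phi^{-1}(w)))$ gives $d(S^n(w), S^n(w_0)) < \epsilon$ for all $n$ whenever $d(w, w_0) < \delta'$, which is exactly the required equicontinuity.

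To finish, I would apply the identical argument to the conjugacy $R = \phi^{-1} \circ S \circ \phi$, obtaining $\phi^{-1}(\mathcal{F}(S)) \subseteq \mathcal{F}(R)$, that is $\mathcal{F}(S) \subseteq \phi(\mathcal{F}(R))$. Combined with the previous inclusion this gives $\phi(\mathcal{F}(R)) = \mathcal{F}(S)$, and taking complements in $\widehat{\mathbb{C}}$ completes the proof. There is no serious obstacle; the only point demanding care is to keep the equicontinuity estimates uniform in $n$ and to work in the spherical metric, so that compactness of $\widehat{\mathbb{C}}$ can be invoked to make $\phi$ and $\phi^{-1}$ uniformly continuous.
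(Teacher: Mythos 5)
Your proof is correct, and it is essentially the standard argument: the paper itself gives no proof, citing Beardon's Theorem 3.1.4, whose proof likewise transfers equicontinuity (in the chordal metric) through the conjugacy $S^n=\phi\circ R^n\circ\phi^{-1}$ using the uniform continuity of M\"obius maps on the compact sphere, with the reverse inclusion by symmetry. Your careful handling of the uniformity in $n$ and the passage to complements via the homeomorphism $\phi$ is exactly right, so there is nothing to add.
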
 
 
There are different polynomials giving rise to the same Chebyshev's method up to conjugacy. The so called scaling theorem, which is also true for the Chebyshev-Halley method makes it precise. For brevity we use $H_p$ instead of $H_p ^{\sigma}$ for denoting the Chebyshev-Halley method of order $\sigma$ applied to $p$.
 \begin{theorem}[Scaling Theorem]\label{ST}
Let  $p$ be a polynomial of degree at least two. Then $H_p = H_{\lambda p} $ for all $\lambda \neq 0$. If $T (z) = \alpha z+ \beta$, with $\alpha, \beta \in \mathbb{C}, \alpha \neq 0$ and $g=  p\circ T$ then $T\circ H_g\circ T^{-1}=H_p.$
\end{theorem}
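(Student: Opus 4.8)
The plan is to verify both assertions by direct substitution, exploiting the fact that $H_p^\sigma$ is assembled only from the two ratios $\frac{p}{p'}$ and $\frac{p p''}{(p')^2-\sigma p p''}$, each of which is homogeneous of degree zero in $p$ and transforms in a controlled way under an affine change of variable. No deep idea is needed; the content is entirely in how these ratios behave under the two operations $p \mapsto \lambda p$ and $p \mapsto p \circ T$.

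For the scaling statement I would replace $p$ by $\lambda p$ with $\lambda \neq 0$ and record that $(\lambda p)' = \lambda p'$ and $(\lambda p)'' = \lambda p''$. Substituting into the two ratios, every power of $\lambda$ cancels: one has $\frac{\lambda p}{\lambda p'} = \frac{p}{p'}$, and $\frac{(\lambda p)(\lambda p'')}{(\lambda p')^2 - \sigma(\lambda p)(\lambda p'')} = \frac{\lambda^2 p p''}{\lambda^2[(p')^2 - \sigma p p'']} = \frac{p p''}{(p')^2 - \sigma p p''}$. Hence both the bracketed factor and the Newton term $\frac{p}{p'}$ are unchanged, so $H_{\lambda p} = H_p$ as rational maps.

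For the conjugacy statement I would set $w = T^{-1}(z)$, so that $z = T(w) = \alpha w + \beta$ and $g(w) = p(T(w)) = p(z)$. The chain rule gives $g'(w) = \alpha\, p'(z)$ and $g''(w) = \alpha^2 p''(z)$. Evaluating the two ratios in $H_g(w)$, the second one reproduces exactly $\frac{p(z) p''(z)}{(p'(z))^2 - \sigma p(z) p''(z)}$ after cancelling the common factor $\alpha^2$, while the Newton term becomes $\frac{g(w)}{g'(w)} = \frac{p(z)}{\alpha\, p'(z)}$. Writing $M(z) = 1 + \frac{1}{2}\frac{p(z) p''(z)}{(p'(z))^2 - \sigma p(z) p''(z)}$ for the common bracketed factor, this yields $H_g(w) = w - M(z)\frac{p(z)}{\alpha\, p'(z)}$.

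Finally I would apply $T$: since $T$ multiplies by $\alpha$ and then adds $\beta$, the factor $\alpha^{-1}$ in the correction term is absorbed, giving $T(H_g(w)) = \alpha w + \beta - M(z)\frac{p(z)}{p'(z)} = z - M(z)\frac{p(z)}{p'(z)} = H_p(z)$, and recalling $w = T^{-1}(z)$ gives $T \circ H_g \circ T^{-1} = H_p$. The only step demanding attention is the bookkeeping of the powers of $\alpha$ generated by successive differentiation of $g = p \circ T$; once these are tracked correctly the cancellations are immediate, so I expect no genuine obstacle beyond this routine verification.
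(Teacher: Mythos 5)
Your proposal is correct and follows essentially the same route as the paper: a direct substitution tracking the powers of $\lambda$ and $\alpha$ through $g'=\alpha p'\circ T$ and $g''=\alpha^2 p''\circ T$, with the single surviving factor $\alpha^{-1}$ absorbed when $T$ is applied. The only cosmetic difference is that the paper handles both claims in one computation by taking $g=\lambda\, (p\circ T)$, whereas you verify the scaling and the affine conjugacy separately.
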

\begin{proof}
	For $g(z) = \lambda p(T(z))$, $g'(z)=\lambda p'(T(z))T'(z)=\lambda \alpha p'(T(z))$ and $g''(z)=\lambda \alpha^2 p''(T(z))$. Then
	\begin{align*}
	H_g(z) & = z-\left[1+\frac{1}{2}\dfrac{g(z)g''(z)}{ \left[g'(z) \right]^2 -\sigma g(z)g''(z)}\right]\frac{g(z)}{g'(z)}\\
	& = z-\left[1+\frac{1}{2}\dfrac{\lambda^2 \alpha^2 p(T(z))p''(T(z))}{ \left[\lambda \alpha p'(T(z))\right]^2-\sigma \lambda^2 \alpha^2 p(T(z))p''(T(z))} \right]\frac{\lambda p(T(z))}{\lambda \alpha p'(T(z))}\\
	& = z-\frac{1}{\alpha}\left[1+\frac{1}{2}\dfrac{p(T(z))p''(T(z))}{\left[ p'(T(z)) \right]^2-\sigma p(T(z))p''(T(z))}\right]\frac{p(T(z))}{p'(T(z))}.
	\end{align*}
	This implies that 
	\begin{align*}
	T\circ H_g(z)& =\alpha H_g(z)+\beta = T(z)-\left[1+\frac{1}{2}\dfrac{p(T(z))p''(T(z))}{[p'(T(z))]^2- \sigma p(T(z))p''(T(z))}\right]\frac{p(T(z))}{p'(T(z))}.
	\end{align*}
	This is nothing but $ H_p(T(z))$. Now putting $T(z)=z$ we get $H_p =H_{\lambda p}$. Similarly, for $\lambda =1$, we have $T \circ H_g \circ T^{-1}=T_p$.

\end{proof} 
\begin{Remark}
\begin{enumerate}
	\item For every polynomial $p(z)=a_d z^d+ a_{d-1}z^{d-1}+ \cdots  +a_1 z+a_0$, there is an affine map $T(z)=\alpha z+\beta$ where $\alpha^d =\frac{1}{a_d}$ and $\beta=-\frac{a_{d-1}}{d a_d}$  so that the coefficients of $z^d$ and $z^{d-1}$ in $(p \circ T)(z) $ are $1$ and $0$ respectively. It follows from the  Scaling theorem that $C_p$ is conjugate to $C_{p\circ T}$ leading to a considerable amount of simplification. We can assume without loss of generality that $p$ is monic or centered, or both as long as the dynamics of $C_p$ is concerned.
	\item In view of the previous remark, for a cubic polynomial $p$, we assume without loss of generality that $p(z)=z^3 +az+b$ for some $a,b \in \mathbb{C}$. Then $p'(z)=3z^2+a$, $p''(z)=6z$ and $p'''(z)=6$.
    In this case, 
    $$ L_p(z)=\frac{(z^3+az+b)6z}{(3z^2+a)^2}~~~\mbox{and} ~~~ L_{p'}(z)=\frac{3z^2+a}{6z^2}. 
    $$
    Hence
 $$C_p(z)
      =\frac{15z^7+6az^5-15bz^4-a^2z^3-12abz^2-3b^2z-a^2b}{(3z^2+a)^{3}},\label{cubic-Cheby}$$
    and $$ C' _p(z)= \frac{L_p (z)^2}{2}(3-L_{p'}(z))=\frac{3(z^3+az+b)^2 (15z^2-a)}{(3z^2+a)^4}.\label{cubic-Cheby-deri}$$
\label{cheby-for-cubic}
\end{enumerate}
\end{Remark}
As stated earlier, the roots of $p$ are fixed points of $C_p$. But every fixed points of $C_p$ is not necessarily a root of $p$. 

\begin{definition}
A fixed point of $C_p$ is called extraneous if it is not a root of $p$.  
\end{definition}

 The extraneous fixed points of $C_p$ can be attracting, repelling or indifferent. This is where the Chebyshev's method stands out from the comparatively well-studied Konig's methods, where all the extraneous fixed points are repelling.
Now we deal with  all the fixed points of $C_p$. Though the following is well-known, we choose to provide a proof  for the sake of completeness.
 \begin{prop}[Fixed points of $C_p$]
 	Let $C_p$ be the Chebyshev's method applied to a polynomial $p$ with degree $d$ where $d \geq 2$.
 	\begin{enumerate}
 		\item Every root of $p$ with multiplicity $k$  is a fixed point of $C_p$ with multiplier $\frac{(k-1)(2k-1)}{2k^2}$. In particular, every (simple) root of $p$ is an attracting (superattracting) fixed point of $C_p$.
 		\item  The point at $\infty$ is   a  fixed point of $C_p$ with multiplier  $\frac{2d^2}{2d^2-3d+1}.$ In particular, it is repelling.
 		\item A finite extraneous fixed point $\zeta$ of $C_p$ is precisely a root of $L_p (z)=-2$, and is attracting, repelling or indifferent if $2|3-L_{p'}(\zeta)|$ is less than, greater than or is equal to $1$ respectively.
 	\end{enumerate}
 \end{prop}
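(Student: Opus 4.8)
The plan is to handle the three parts separately, in each case reducing the claim to a leading-order expansion of $p,p',p'',p'''$ together with the two given formulas for $C_p$ and its derivative $C_p'(z)=\frac{L_p(z)^2}{2}\bigl(3-L_{p'}(z)\bigr)$. For part (1), near a root $z_0$ of multiplicity $k$ I would write $p(z)=(z-z_0)^k g(z)$ with $g(z_0)\neq 0$. Although $p'$ also vanishes at $z_0$ when $k\geq 2$, the quotient $\frac{p}{p'}$ has a removable singularity there with $\frac{p(z)}{p'(z)}\sim\frac{z-z_0}{k}\to 0$, so that $C_p(z_0)=z_0$ and $z_0$ is a fixed point. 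The same leading-order bookkeeping gives the (removable) limiting values $L_p(z_0)=\frac{k-1}{k}$ and $L_{p'}(z_0)=\frac{k-2}{k-1}$. Substituting these into the derivative formula, and using $3-\frac{k-2}{k-1}=\frac{2k-1}{k-1}$, I would obtain $C_p'(z_0)=\frac{1}{2}\left(\frac{k-1}{k}\right)^2\cdot\frac{2k-1}{k-1}=\frac{(k-1)(2k-1)}{2k^2}$. Rewriting this as $1-\frac{3k-1}{2k^2}$ shows it lies in $[0,1)$ for all $k\geq 1$, vanishing precisely when $k=1$; this yields the attracting (superattracting for simple roots) conclusion.

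For part (2) I would move to the coordinate $w=\frac{1}{z}$ and study $h(w)=\frac{1}{C_p(1/w)}$ near $w=0$, since the multiplier at the fixed point $\infty$ is $|h'(0)|$. Writing $p(z)=a_dz^d+\cdots$, the analogous estimates give $\frac{p(z)}{p'(z)}\sim\frac{z}{d}$ and $L_p(z)\to\frac{d-1}{d}$ as $z\to\infty$, hence $C_p(z)=c\,z+O(1)$ with $c=1-\frac{3d-1}{2d^2}=\frac{2d^2-3d+1}{2d^2}$. Since $c\neq 0$ for $d\geq 2$, the point $\infty$ is fixed, and a short computation with $h$ gives $h'(0)=\frac{1}{c}=\frac{2d^2}{2d^2-3d+1}$. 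As $2d^2-3d+1=(2d-1)(d-1)>0$ for $d\geq 2$, this modulus exceeds $1$, so $\infty$ is repelling.

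For part (3), let $\zeta$ be finite with $p(\zeta)\neq 0$. If $p'(\zeta)=0$ then $\frac{p}{p'}$ blows up at $\zeta$, so $\zeta$ is a pole rather than a fixed point; thus any finite extraneous fixed point has $p'(\zeta)\neq 0$, and then $C_p(\zeta)=\zeta$ is equivalent to $1+\frac{1}{2}L_p(\zeta)=0$, i.e.\ $L_p(\zeta)=-2$. Conversely, a finite root of $L_p=-2$ forces $L_p(\zeta)$ to be finite (hence $p'(\zeta)\neq 0$) and $p(\zeta)\neq 0$ (a root of $p$ gives $L_p\in\{0,\frac{k-1}{k}\}\neq -2$), so it is an extraneous fixed point. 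Finally, substituting $L_p(\zeta)^2=4$ into the derivative formula gives $C_p'(\zeta)=2\bigl(3-L_{p'}(\zeta)\bigr)$, so the multiplier equals $2\,|3-L_{p'}(\zeta)|$, classifying $\zeta$ exactly as stated.

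The individual leading-order limits are routine; the one point that genuinely needs care is justifying, in part (1), that the apparent singularities of $C_p$ and of the formula for $C_p'$ at a multiple root of $p$ are removable, so that the multiplier there is legitimately the limit of $\frac{L_p^2}{2}\bigl(3-L_{p'}\bigr)$ rather than something to be computed by direct differentiation.
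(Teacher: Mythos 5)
Your proposal is correct and follows essentially the same route as the paper: factor out $(z-z_0)^k$ to obtain the limiting values $L_p(z_0)=\frac{k-1}{k}$ and $L_{p'}(z_0)=\frac{k-2}{k-1}$ and substitute into $C_p'=\frac{L_p^2}{2}(3-L_{p'})$, read off the multiplier at $\infty$ from the leading-order behaviour $C_p(z)\sim\frac{2d^2-3d+1}{2d^2}\,z$, and characterize the finite extraneous fixed points as exactly the roots of $L_p(z)=-2$ using $L_p(\alpha)=\frac{k-1}{k}\in[0,1)$ at roots. The only cosmetic difference is that the paper extracts the multiplier at $\infty$ from the leading coefficients of the numerator and denominator of $C_p$ written as a quotient of polynomials (citing the standard formula), whereas you pass explicitly to the chart $w=1/z$; both are equivalent.
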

 \begin{proof}
 	\begin{enumerate}
 		\item Let $\alpha$ be a root of $p$ with multiplicity $k$. Then $p(z)=(z-\alpha)^kg(z)$ for some polynomial $g$ with $g(\alpha)\neq 0$. Further,
 	\begin{eqnarray}\nonumber
 	&p'(z)&=k(z-\alpha)^{k-1}g(z)+(z-\alpha)^kg'(z)
 =(z-\alpha)^{k-1}\left[kg(z)+(z-\alpha)g'(z)\right],
 	\end{eqnarray}
 	\begin{eqnarray}\nonumber
 	&p''(z)&=k(k-1)(z-\alpha)^{k-2}g(z)+2k(z-\alpha)^{k-1}g'(z)+(z-\alpha)^kg''(z)\\&& \nonumber
 	=(z-\alpha)^{k-2}\left[k(k-1)g(z)+2k(z-\alpha)g'(z)+(z-\alpha)^2g''(z)\right]
 	\end{eqnarray} and
 	\begin{eqnarray}\nonumber
 	&p'''(z)&=k(k-1)(k-2)(z-\alpha)^{k-3}g(z) +3k(k-1)(z-\alpha)^{k-2}g'(z)\\ &&\nonumber +3k(z-\alpha)^{k-1}g''(z)+(z-\alpha)^kg'''(z)\\&&
 	\nonumber
 	=(z-\alpha)^{k-3}\left[k(k-1)(k-2)g(z)\right.\\ &&\nonumber \left.+3k(k-1)(z-\alpha)g'(z)+3k(z-\alpha)^2g''(z)+(z-\alpha)^3g'''(z)\right].
 	\end{eqnarray}
 	Note that for $k=2$, the first term in the expression of $p'''$ vanishes and we have $p'''(z)=6g'(z)+6(z-\alpha)g''(z)+(z-\alpha)^2g'''(z).$ Hence we get
 	
 	$$	 L_p(z) =\frac{p(z)p''(z)}{(p'(z))^2} 
 	=\frac{g(z)\left[k(k-1)g(z)+2k(z-\alpha)g'(z)+(z-\alpha)^2g''(z)\right]}{\left[kg(z)+(z-\alpha)g'(z)\right]^2}.  $$
 
 	This gives
 \begin{equation}
  L_p(\alpha)=\frac{k-1}{k}.
  \label{Lp}
 \end{equation}  
Similarly it is found that $L_{p'}(\alpha)=\frac{k-2}{k-1}$. 	Hence
 $$ C_p'(\alpha) =\frac{[ L_p(\alpha) ]^2}{2}\left[3-L_{p'}(z)\right]    =\frac{(k-1)^2}{2k^2}\left[3-\frac{k-2}{k-1}\right]    =\frac{(k-1)(2k-1)}{2k^2}.
$$
The rest is straightforward.
 	
 	\item Since $C_{p}=C_{\lambda p}$ for each $\lambda \in \mathbb{C} \setminus \{0\}$ and every polynomial $p$,  without loss of generality we assume that $p$ is a monic polynomial. If $\deg(p)=d \geq 2$ then   $p(z)=z^d+a_1z^{d-1}+...+a_d$,
 	 $p'(z)=dz^{d-1}+(d-1)a_1z^{d-2}+...+a_{d-1}$  and
 	 $p''(z)=d(d-1)z^{d-2}+(d-1)(d-2)a_1z^{d-3}+...+2a_{d-2}.$ 
 	Now 
 	$$	 C_p(z) =z-\left(1+\frac{1}{2}\frac{p(z)p''(z)}{[p'(z)]^2}\right)\frac{p(z)}{p'(z)} 
 	=\frac{2z[p'(z)]^3-2p(z)[p'(z)]^2-[p(z)]^2p''(z)}{2[p'(z)]^3}. $$
 	
 	Here $2z[p'(z)]^3$, $2p(z)[p'(z)]^2$ and $[p(z)]^2p''(z)$ are all polynomials of the same degree $3d-2$ with the leading coefficients $2d^3, 2d^2$ and $ d(d-1) $ respectively. However $2(p'(z))^3$ is a polynomial with degree $3d-3$ and its leading coefficient is $2d^3$. Therefore
 	\begin{equation}\label{C_p}
 	C_p(z)=\frac{(2d^3-3d^2+d) z^{3d-2}+\alpha_{3d-1}z^{3d-1}+\dots +\alpha_{0}}{2d^3 z^{3d-3}+\beta_{3d-2}z^{3d-2}+\dots +\beta_{0}},
 	\end{equation}
for some $\alpha_0,\alpha_1,\alpha_2,\cdots, a_{3d-1}, \beta_0,\beta_1,\beta_2,\cdots, \beta_{3d-2} \in \mathbb{C}$.
 	Hence $C_p (\infty)=\infty$ and its multiplier is $ \frac{2d^3}{2d^3-3d^2+d}=\frac{2d^2}{2d^2-3d+1}$ (See page 41,\cite{Beardon1991}).
 	\item  Each solution of $L_p(z)=-2$ is a fixed point of $C_p$ but is not a root of $p$.  This is because the value of $L_p$ at each root of $p$ is in $(0,1)$ by Equation~(\ref{Lp}). Thus, the extraneous fixed points of $C_p$ are precisely the roots of $L_p(z)=-2$.  
 	It now follows from  Equation~(\ref{deri}) that the multipler of an extraneous fixed point $\zeta$ is $|2(3-L_{p'}(\zeta))|.$ The rest is obvious.
 	 	\end{enumerate}
 \end{proof}
\begin{Remark}
	\begin{enumerate}
		\item 
It is possible that the numerator and the denominator of $C_p$ in Equation~(\ref{C_p}) have a common factor making the  degree of $C_p$ strictly less than $2d^3-3d^2+d$. For example, if $p(z)=z^3+c,$ $c\neq 0$ then $deg(C_p)=6.$  (See Proposition~\ref{lambda=5and6} in Section 4)
In this case, the leading coefficients of the numerator and the denominator of $C_p$ changes after cancelling the common factors. However,  their ratio remains unchanged giving that the  multiplier of infinity is well-defined. 

\item Every fixed point of $C_p$ which is not attracting is extraneous. But an extraneous fixed point of $C_p$ can be attracting, repelling or indifferent depending on the nature of $p$.
\end{enumerate}
\label{fixedpoints}
\end{Remark}

  \section{Degree of the Chebyshev's method}
 
  A fixed point is multiple if and only if it is rationally indifferent with multiplier equal to $1$(See Page 142,~\cite{Milnor}). This fact is used in the following proof.

 \begin{proof}[Proof of Theorem \ref{degree}]
 	Let $p$ be a monic polynomial with simple root at $\alpha_i$, $i=1,2,\dots, m$; double root   at $\beta_j$, $j=1,2,\dots,n$ and root  $\gamma_k$, $k=1,2,\dots,r$ with multiplicity $a_k \geq 3.$ Then
 	$$p(z)=\prod_{i=1}^{m}(z-\alpha_i)\prod_{j=1}^{n}(z-\beta_j)^2\prod_{k=1}^{r}(z-\gamma_k)^{a_k}$$
 	and  $\deg(p)=d=m+2n+M$ where $M=\sum_{k=1}^{r}a_k.$
 
  If $C_p (z)=\frac{F(z)}{G(z)}$ then $\deg(F)=\deg(G)+1$ by Equation~(\ref{C_p}) and  therefore, the sum of all the roots of $F(z)-zG(z)=0$ counting multiplicities is nothing but $\deg(C_p)$. This is because the leading coefficients of $F$ and $G$ are different, $\infty$ is a simple fixed point of $C_p$ and the number of fixed points of $C_p$, counting multiplicity is $\deg(C_p)+1$.   Each   root of $p$ is an attracting or a superattracting fixed point of $C_p$, and these are simple roots of $F(z)-zG(z)=0$. Every other fixed point of $C_p$ are extraneous  and is a root of $L_p(z)+2=0$. As is evident, a multiple fixed point of $C_p$ with multiplicity $k$ is a multiple root of $L_p(z)+2=0$ with the same multiplicity and vice-versa. Thus,
 \begin{equation}
  \deg(C_p)=m+n+r+\deg(L_p).
  \label{degree-formula}
 \end{equation} 
   Now we need to find $\deg(L_p)$ in order to determine $\deg(C_p)$. 
  
  \par If $\alpha$ be a root of $p$ with multiplicity $k$, then it is a root of $p'$ with multiplicity $k-1$. Therefore
\begin{equation}
  p'(z)=\prod_{j=1}^{n}(z-\beta_j)\prod_{k=1}^{r}(z-\gamma_k)^{a_k-1}g(z) $$ and $$ p''(z)=\prod_{k=1}^{r}(z-\gamma_k)^{a_k-2}h(z)
  ,
  \label{g}\end{equation}
 	where $g$ and $h$ are some polynomials such that $g$ is non-zero at each $\alpha_i, \beta_j$ and $\gamma_k$ and $h$ is non-zero at each $\beta_j$ and $\gamma_k$, for $i=1,2,\dots,m,$ $j=1,2,\dots,n$, and $k=1,2,\dots,r.$ Here we donot rule out $h(\alpha_i)=0$ and that is possible, but not relevant here. Note that
 	\begin{align*}
 	\deg(g)& =\deg(p')-n-\sum_{k=1}^{r}(a_k-1)  =(d-1)-n-M+r =m+n+r-1
 	\end{align*}
 	and
 	\begin{align*}
 	\deg(h)& =\deg(p'')-\sum_{k=1}^{r}(a_k-2) =(d-2)-M+2r  =m+2n+2r-2.
 	\end{align*}
 	Now 
 	\begin{align*}
 	L_p(z)& =\dfrac{p(z)p''(z)}{[p'(z)]^2}\\
 	& =\dfrac{\prod_{i=1}^{m}(z-\alpha_i)\prod_{j=1}^{n}(z-\beta_j)^2\prod_{k=1}^{r}(z-\gamma_k)^{2a_k-2}h(z)}{\prod_{j=1}^{n}(z-\beta_j)^2\prod_{k=1}^{r}(z-\gamma_k)^{2a_k-2}[g(z)^2]} =\dfrac{\prod_{i=1}^{m}(z-\alpha_i)h(z)}{[g(z)]^2}
 	\end{align*}
 	Letting $L_p (z)=\dfrac{P(z)}{Q(z)}$, we note that every common root of $P$ and $Q$ is a root of $g$ and hence is different from each $\alpha_i, \beta_j$ and $\gamma_k$. Thus any such common root is not a root of $p$. Further, it is a common root of $g$ and $h$, i.e., it is a critical point as well as an inflection point of $p$ ($p''$ vanishes at this point). In other words, every common root of $P$ and $Q$, if exists, is a special critical point of $p$. Conversely, every special critical point is a common root of $P$ and $Q$ (in fact of $g$ and $h$).
 	
 \par 	
 	Let $p$ has $s$ number of distinct special critical points, say $c_j$, with multiplicity $b_j$ for $j=1,2,\dots, s$. Then $g(z)=\prod_{j=1}^{s}(z-c_j)^{b_j}\tilde{g}(z)$ and $h(z)=\prod_{j=1}^{s}(z-c_j)^{b_j-1}\tilde{h}(z)$ where  $\tilde{g}$ and $\tilde{h}$ are polynomials without any common root. In this case, 
 	\begin{align*}
 	L_p(z) 
 	=\dfrac{\prod_{i=1}^{m}(z-\alpha_i)\prod_{j=1}^{s}(z-c_j)^{b_j-1}\tilde{h}(z)}{\prod_{j=1}^{s}(z-c_j)^{2b_j}[\tilde{g}(z)]^2}  =
 	\dfrac{\prod_{i=1}^{m}(z-\alpha_i)\tilde{h}(z)}{\prod_{j=1}^{s}(z-c_j)^{b_j+1}[\tilde{g}(z)]^2}.
 	\end{align*}
 	Now	$ \deg(\tilde{g}) =\deg(g)-\sum_{j=1}^{s}b_j= m+n+r-1-B $ and $\deg(\tilde{h})=\deg(h)-\sum_{j=1}^{s}(b_j-1)=m+2n+2r-2-B+s.$ Therefore,
 $ \deg(  \prod_{i=1}^{m}(z-\alpha_i)\tilde{h}(z)   )=m+\deg(\tilde{h})=2m+2n+2r-2-B+s$
and  $\deg(\prod_{j=1}^{s}(z-c_j)^{b_j+1}\tilde{g}(z)^2 )  =\sum_{j=1}^{s}(b_j+1)+2\deg(\tilde{g})=B+s+2m+2n+2r-2-2B =2m+2n+2r-2-B+s.$ This implies that $\deg(L_p)= 2m+2n+2r-2-B+s.$ Hence by Equation~(\ref{degree-formula}), 
 	$$\deg(C_p)=3(m+n+r)-2-B+s.$$

 		If $p$ has no special critical point then $s=B=0$ and we get,
 	$$deg(C_p)=3(m+n+r)-2.$$
 
 \end{proof}

\begin{proof}[Proof of Corollary~\ref{degree-corollary}]
	\begin{enumerate}
		\item 	If $p$ has two distinct roots and $p(z)=(z-a)^m (z-b)^n$ for some $m,n \in \mathbb{N}, a,b \in \mathbb{C}$ then it has only one critical point different from the roots, namely $\frac{mb+na}{m+n}$. Further, it is a simple critical point. Hence $p$ has no special critical point and  $\deg(C_p)=4$. 
		\par We assert that four is the minimum possible degree of $C_p$ for every polynomial $p$. To see it, let $p$ have at least three distinct roots. Then $\deg(p) \geq 3$.  If $p$ has no special critical point then  it follows from Theorem~\ref{degree} that $\deg(C_p) \geq 7$. Now assume that $p$ has at least one special critical point. Since each special critical point of $p$ is a root of $g$  and $\deg(g) = (m+n+r)-1$ where $g,m,n,r $ are as given in the proof of Theorem~\ref{degree}, $s \geq 1$ and $B \leq m+n+r-1$.  Then  $\deg(C_p) \geq 3(m+n+r)-2-(m+n+r-1)+1=2 (m+n+r)$. Since  $ m+n+r \geq 3$, we have $\deg(C_p) \geq 6$. It is clear that $\deg(C_p)=5$ is never possible for any polynomial $p$.
\item 
Recall that a special critical point of a rational map is a critical point with multiplicity at least two which is not a root.
	If for a polynomial $p$ with degree $d$, all the critical points are special, then $p$ has no multiple roots, i.e., $p$ is generic. Clearly the number of roots of $p$ is $d$. Let
 $p'(z)=\prod_{j=1}^{s}(z-c_j)^{b_j}$ where $b_j\geq 2$ and  $p(c_j)\neq 0$ for any $j=1,2,\dots,s$. Then $\deg(p')=d-1=\sum_{j=1}^{s} b_j$ and
	 $p''(z)=\prod_{j=1}^{s}(z-c_j)^{b_j-1}q(z)$ 
	where $q(c_j)\neq 0$ for any $j=1,2,\dots,s$. So $\deg(p'')=d-2=\sum_{j=1}^{s}(b_j-1)+\deg(q)$. This implies that $\deg(q)=d-2-\sum_{j=1}^s (b_j -1)=s-1.$ Thus
	 $\deg(L_p(z)) =\deg( \frac{p(z)q(z)}{\prod_{j=1}^{s}(z-c_j)^{b_j+1}})=d+s-1.$ Now it follows from Equation~(\ref{degree-formula}) that $\deg(C_p)=2d+s-1$. Further, if $p$ is unicritical then $p(z)=(z-a)^d+b$, for some $a, b \in \mathbb{C}, b \neq 0$. If $d=2$ then by the previous part of this corollary, $\deg(C_p)=4$. If $d \geq 3$ then its only critical point is $a$ and that is special. Now, it follows from the preceeding lines that $\deg(C_p)=2d$.
	 	\end{enumerate}
\end{proof}
\section{ Dynamics  of $C_\lambda$ }
%
We need the following well-known results for the proofs.
 For a rational map $R$, let $C_R$ denote the set of all critical points of $R$.
\begin{lemma}
	Let $U$ be a periodic Fatou component of  a rational map $R$.
	\begin{enumerate}
		\item 	If $U$ is an immediate attracting basin or an immediate parabolic basin  then  $U \cap C_R \neq \emptyset$.
		\item 	If $U$ is a Siegel disk or a Herman ring  then its boundary is contained in the closure of  $\{R^n(c):n \geq 0~\mbox{and}~c \in C_R \}$.
	\end{enumerate}
	\label{basic-dynamics-lemma}
\end{lemma}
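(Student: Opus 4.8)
The plan is to reduce everything to the invariant case and then use local normalizing coordinates together with a Picard/Montel argument. Since $U$ has some period $p$, I would replace $R$ by $g=R^p$, so that $U$ becomes a $g$-invariant component and $C_g=\bigcup_{j=0}^{p-1}R^{-j}(C_R)$; a critical point of $g$ lying in $U$ then maps forward to a critical point of $R$ somewhere in the cycle of $U$, which gives the assertion (for a fixed component, as in every application here, this reads literally $U\cap C_R\neq\emptyset$). So it suffices to show that an invariant immediate attracting or parabolic basin $U$ of $g$ contains a critical point of $g$.

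For part (1) I would split into three subcases. If the attracting fixed point $z_0\in U$ is superattracting then $g'(z_0)=0$, so $z_0$ itself is a critical point and there is nothing to prove. If $z_0$ is attracting with $0<|g'(z_0)|<1$, I argue by contradiction: assume $U\cap C_g=\emptyset$. Koenigs' theorem gives a neighborhood $V\subseteq U$ and a biholomorphism $\phi$ with $\phi(g(z))=\lambda\phi(z)$, where $\lambda=g'(z_0)$. Because $U$ contains no critical point, every inverse branch of $g$ used to pull $V$ back stays univalent, and the functional equation lets me define $\psi(w)=g^{n}\bigl(\phi^{-1}(\lambda^{n}w)\bigr)$ for $n$ large (independent of $n$), a single-valued holomorphic map $\psi:\mathbb{C}\to U\subseteq\widehat{\mathbb{C}}$ with $\psi'(0)\neq0$. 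Since $U\subseteq\widehat{\mathbb{C}}\setminus\mathcal{J}(g)$ and the Julia set of a map of degree at least two is infinite, $\psi$ omits at least three points; by Picard's theorem $\psi$ is constant, contradicting $\psi'(0)\neq0$.

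For the parabolic subcase the same scheme works with the Leau--Fatou normal form: on an attracting petal contained in $U$ there is a Fatou coordinate conjugating $g$ to $w\mapsto w+1$ on a right half-plane, and the absence of critical points in $U$ again allows the inverse coordinate to be continued by the functional equation to a non-constant holomorphic map of $\mathbb{C}$ into $U$, contradicting Picard exactly as before. For part (2), let $P=\overline{\{R^n(c):n\geq 0,\ c\in C_R\}}$ and suppose, for contradiction, that some $\zeta\in\partial U$ lies outside $P$. I would pick a disk $D\ni\zeta$ with $\overline{D}\cap P=\emptyset$; then no iterate $R^{pn}$ has a critical value in $D$, so all inverse branches of all $R^{pn}$ are defined and univalent on $D$, and since they avoid the (at least three) points of $P$ they form a normal family by Montel's theorem. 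Using that $g=R^p$ acts on $U$ as an irrational rotation, hence recurrently, I choose $n_k$ with $R^{pn_k}\to\mathrm{id}$ on compact subsets of $U$ and follow the inverse branches $h_k$ tracking this rotation; normality and the identity theorem force $h_k\to\mathrm{id}$ on all of $D$, whence $R^{pn_k}\to\mathrm{id}$ near $\zeta$. This makes $\{R^{pn}\}$ equicontinuous at $\zeta$, so $\zeta\in\mathcal{F}(R)$, contradicting $\zeta\in\partial U\subseteq\mathcal{J}(R)$; therefore $\partial U\subseteq P$.

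The main obstacle is the bookkeeping of inverse branches. In part (1) the delicate point is to verify that the absence of critical points in $U$ really permits the univalent pullback at every stage and that the resulting $\psi$ is single-valued with image inside $U$, so that the omitted Julia values are genuinely omitted; this is where simple connectivity of the relevant pullback chain, rather than of $U$ itself, must be used. In part (2) the hard part is to select the inverse branches consistently so that they track the rotation dynamics and to make the recurrence $R^{pn_k}\to\mathrm{id}$ precise, after which the equicontinuity contradiction is immediate. The periodic-to-fixed reduction also requires care, since the critical point produced for $g=R^p$ may sit in a sibling component of the cycle rather than in $U$ itself; this is harmless for the fixed components to which the lemma is actually applied.
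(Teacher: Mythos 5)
The first thing to note is that the paper does not prove this lemma at all: it is stated as a ``well-known result'' (these are the classical theorems of Fatou; see e.g.\ Milnor, Theorems 8.6, 10.15 and 11.17), so your proposal can only be judged on its own merits. Your strategy is exactly the standard textbook proof: reduction to the invariant case via $g=R^p$, Koenigs/Fatou coordinates with extension of the inverse linearizer by critical-point-free pullbacks and a Picard contradiction for part (1), and normal families of inverse branches off the postcritical set for part (2). Your observation that for $p>1$ the critical point of $R^p$ may sit in a sibling component of the cycle --- so the statement as literally written is safe only for invariant components, which is how the paper uses it --- is correct and worth making. One slip in part (1): the displayed formula $\psi(w)=g^{n}\bigl(\phi^{-1}(\lambda^{n}w)\bigr)$ is wrong as written, since the functional equation $g(\phi^{-1}(u))=\phi^{-1}(\lambda u)$ gives $g^{n}(\phi^{-1}(\lambda^{n}w))=\phi^{-1}(\lambda^{2n}w)$, which contracts back toward the fixed point and extends nothing; what your surrounding prose describes, and what is needed, is $\psi(w)=g_{*}^{-n}\bigl(\phi^{-1}(\lambda^{n}w)\bigr)$ with $g_{*}^{-n}$ the inverse branches lifted through the unbranched covering $g\colon U\to U$. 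I read this as a typo rather than a conceptual error.

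The genuine gap is the last step of part (2). From $h_k\to\mathrm{id}$ on $D$ you do get $R^{pn_k}\to\mathrm{id}$ uniformly near $\zeta$, but this only makes the \emph{subsequence} $\{R^{pn_k}\}$ equicontinuous there; the conclusion $\zeta\in\mathcal{F}(R)$ requires normality of the full family $\{R^n\}$, and nothing in your argument controls the iterates between consecutive $n_k$. So ``equicontinuous at $\zeta$, hence $\zeta\in\mathcal{F}(R)$'' is a non sequitur. The standard repair: repelling periodic points are dense in $\mathcal{J}(R)$, so pick a repelling point $q$ of period $m$ and multiplier $\mu$, $|\mu|>1$, inside the neighborhood where $R^{pn_k}\to\mathrm{id}$. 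Then $R^{pn_k}(q)\to q$, and since the forward orbit of $q$ is finite, $R^{pn_k}(q)=q$ for all large $k$; hence $m$ divides $pn_k$ and $(R^{pn_k})'(q)=\mu^{pn_k/m}\to\infty$, contradicting $(R^{pn_k})'\to 1$ locally uniformly (Weierstrass convergence of derivatives). A smaller unproved assertion is that $P$ contains at least three points, which your Montel step needs; justify it by noting that a rational map of degree at least two whose postcritical set has at most two points is conjugate to $z\mapsto z^{\pm d}$, which has no Siegel disk or Herman ring. With these two repairs your proof is complete and is the standard argument the paper implicitly invokes.
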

\begin{lemma}[Riemann-Hurwitz formula]
If $R: U \to V$	is a rational map between two of its Fatou components $U$ and $V$ then it is a proper map of some degree $d$ and $c(U)-2=d(c(V)-2)+C$ where $c(.)$ denotes the connectivity of a domain and $C$ is the number of critical points of $R$ in $U$ counting multiplicity. Further, if $c(V)=1$ and there is no critical point of $R$ in $U$ then $c(U)=1$.
	 \label{r-h}
\end{lemma}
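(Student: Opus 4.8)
The plan is to describe the Fatou set of $C_\lambda$ completely and then verify that every Fatou component is simply connected; since it is well known that the Julia set of a rational map is connected exactly when all of its Fatou components are simply connected, this yields the theorem.

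First I would record the critical data. The polynomial $p_\lambda(z)=z^3+3z+b$ is generic and its critical points $\pm i$ satisfy $p_\lambda''(\pm i)=\pm 6i\neq 0$, so there is no special critical point; hence $\deg(C_\lambda)=3\cdot 3-2=7$ by Theorem~\ref{degree} and $C_\lambda$ has $12$ critical points counted with multiplicity. From the cubic formula $C_p'(z)=\frac{3(z^3+az+b)^2(15z^2-a)}{(3z^2+a)^4}$ of Remark~\ref{cheby-for-cubic} with $a=3$, these are the three roots of $p_\lambda$ (superattracting fixed points of local degree $3$, multiplicity $2$ each), the two triple poles $\pm i$ (multiplicity $2$ each), and the two simple free critical points $\pm 1/\sqrt5$. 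The poles are mapped to $\infty$, which is a repelling fixed point with multiplier $\frac{2d^2}{2d^2-3d+1}=\frac95$, so $\pm i\in\mathcal J(C_\lambda)$; thus only $\pm1/\sqrt5$ remain unaccounted for.

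The decisive reduction is that for $\lambda\in[-1,1]$ the constant $b=\frac{3\lambda^2-39\lambda+124}{(5-\lambda)\sqrt{5-\lambda}}$ is real, so $C_\lambda$ has real coefficients, commutes with complex conjugation, and leaves $\widehat{\mathbb R}=\mathbb R\cup\{\infty\}$ forward invariant. A short computation gives the multiplier of an extraneous fixed point $\zeta$ as $5-\zeta^{-2}$, so the extraneous fixed point of multiplier $\lambda$ is the real point $\zeta=-1/\sqrt{5-\lambda}$, attracting for $\lambda\in(-1,1)$ and rationally indifferent for $\lambda=\pm1$. Since the free critical points $\pm1/\sqrt5$ are real, their full orbits stay on $\widehat{\mathbb R}$, and I would analyze the one--dimensional dynamics of $C_\lambda|_{\widehat{\mathbb R}}$ by a monotonicity and interval argument to show that each free critical point lies in a periodic immediate basin, in fact in the immediate basin $V$ of $\zeta$. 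The two endpoints $\lambda=\pm1$ are treated separately using the attracting petals at the parabolic point. This step is the crux, and it is precisely where the hypothesis $\lambda\in[-1,1]$ is used: outside this interval the free critical orbits may land in a strictly preperiodic component and disconnect the Julia set.

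With the free critical orbits placed, I would close off the Fatou structure. Every critical point then lies either in $\mathcal J(C_\lambda)$ (the poles) or in one of four periodic immediate basins: the three superattracting basins of the roots and the extraneous basin $V$. By Lemma~\ref{basic-dynamics-lemma}(1) any additional attracting or parabolic cycle would require a further critical point, and by Lemma~\ref{basic-dynamics-lemma}(2) a Siegel disk or Herman ring would have its boundary in the closure of the now exhausted postcritical set; both are therefore impossible, so the Fatou set is exactly the union of these four immediate basins together with all their preimages. Finally, for each periodic immediate basin $U$, applying the Riemann--Hurwitz formula (Lemma~\ref{r-h}) to the proper self-map $C_\lambda|_U:U\to U$ with its known critical content forces $c(U)=1$ (for a superattracting basin the unique interior critical point is the fixed point of local degree $3$, while $V$ carries the free critical points), the parabolic endpoints being handled via the appropriate first-return map. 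Because no strictly preperiodic component contains a critical point, the last clause of Lemma~\ref{r-h} propagates simple connectivity backward along every grand orbit, so all Fatou components are simply connected and $\mathcal J(C_\lambda)$ is connected. The main obstacle is the real one--dimensional analysis pinning down the free critical orbits, together with the separate treatment of the two parabolic endpoints.
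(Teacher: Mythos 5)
Your proposal does not address the statement it was supposed to prove. The statement is Lemma~\ref{r-h}, the Riemann--Hurwitz formula for a rational map between two of its Fatou components; what you have written is instead a proof sketch of Theorem~\ref{connected_J_set} (connectedness of $\mathcal{J}(C_\lambda)$ for $\lambda\in[-1,1]$), in which Lemma~\ref{r-h} is invoked as a tool. Nothing in your text establishes any part of the lemma itself: you would need to show that $R|_U\colon U\to V$ is proper (using that $R$ is open and continuous on $\widehat{\mathbb{C}}$ and that the full preimage of a Fatou component is a union of Fatou components, so the preimage in $U$ of a compact subset of $V$ is compact), conclude that it has a well-defined degree $d$, and then apply the Riemann--Hurwitz relation $\chi(U)=d\,\chi(V)-C$ for proper maps of finitely connected planar domains, with $\chi=2-c$, which rearranges to $c(U)-2=d(c(V)-2)+C$. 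The final clause is then a one-line consequence: if $c(V)=1$ and $C=0$ then $c(U)-2=-d$, and since $c(U)\geq 1$ this forces $d=1$ and $c(U)=1$. The paper itself states this lemma without proof as standard background, so a correct submission could have been short; but a sketch of a different theorem cannot substitute for it.

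As a secondary point, even read as an attempt at Theorem~\ref{connected_J_set}, your sketch diverges from the paper and contains a factual error. You claim both free critical points $\pm\frac{1}{\sqrt5}$ lie in the immediate basin $V$ of the extraneous fixed point $-\frac{1}{\sqrt{5-\lambda}}$, and that no strictly preperiodic component contains a critical point. In the paper, only $-\frac{1}{\sqrt5}$ lies in that immediate basin; the other critical point satisfies $C_\lambda(\frac{1}{\sqrt5})<r_\lambda$, so $\frac{1}{\sqrt5}$ lies in a preimage of the basin $\mathcal{A}_\lambda$ of the real root, and the technical heart of the paper (the B\"ottcher-coordinate argument in Lemma~\ref{immediatebasin-realfixedpoint}) is precisely to show $\frac{1}{\sqrt5}\notin\mathcal{A}_\lambda$, so it sits in a \emph{strictly preperiodic} component. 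The paper then concludes not by propagating simple connectivity along grand orbits, but by showing $\mathcal{A}_\lambda$ is unbounded and simply connected with both poles on its boundary and invoking Lemma~\ref{poleinboundary}, which rules out multiply connected Fatou components globally.
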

The following lemma is crucial to prove the simple connectivity of Chebyshev's method applied to polynomials.
 \begin{lemma}
 	Let $R$ be a rational map for which $\infty$ is a repelling fixed point. If $\mathcal{A}$
is an unbounded invariant immediate basin of attraction  then its boundary contains at least one pole of $R$.  Further, if all the poles of $R$ are on the boundary of $\mathcal{A}$ and $\mathcal{A}$ is simply connected then the Julia set of $R$ is connected. 
\label{poleinboundary} \end{lemma}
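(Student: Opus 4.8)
The statement has two parts, and I would handle them separately.

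For the first assertion, the plan is to exploit the local structure at $\infty$. Since $\mathcal{A}$ is unbounded and $\infty$ is repelling, $\infty\in\mathcal{J}(R)$, so $\infty\notin\mathcal{A}$ while $\infty\in\partial\mathcal{A}$. As $\mathcal{A}$ is an invariant immediate basin it is a connected component of $R^{-1}(\mathcal{A})$, so $R|_{\mathcal{A}}:\mathcal{A}\to\mathcal{A}$ is proper; let $k$ be its degree. First I would rule out $k=1$: if $R|_{\mathcal{A}}$ were a conformal automorphism of $\mathcal{A}$, then since $\mathcal{A}$ is hyperbolic (its complement contains the infinite set $\mathcal{J}$) and $R$ fixes the attracting point $a\in\mathcal{A}$, the Schwarz--Pick lemma would force $|R'(a)|=1$, contradicting that $a$ is attracting. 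Hence $k\geq 2$. Now I pick $w_n\in\mathcal{A}$ with $w_n\to\infty$; each $w_n$ has $k$ preimages in $\mathcal{A}$, and as $n\to\infty$ these accumulate on $R^{-1}(\infty)\cap\overline{\mathcal{A}}$. Because $R$ has local degree $1$ at $\infty$, at most one of the $k$ preimage branches can tend to $\infty$, so at least $k-1\geq 1$ of them tend to finite poles; any such pole lies in $\overline{\mathcal{A}}\cap\mathcal{J}=\partial\mathcal{A}$, proving the first claim.

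For the connectivity assertion I would use the standard topological equivalence (Alexander duality on $\widehat{\mathbb{C}}$) that $\mathcal{J}(R)$ is connected if and only if every Fatou component is simply connected; it therefore suffices to prove the latter. The engine is the Riemann--Hurwitz formula of Lemma~\ref{r-h}. Writing $e(U)=c(U)-1$ for a Fatou component $U$ with image $V=R(U)$ and mapping degree $\delta_U$, that formula rearranges to $e(U)=\delta_U\,e(V)+\beta_U$, where $\beta_U:=C_U-(\delta_U-1)\geq 0$ and $C_U$ counts the critical points of $R$ in $U$. Consequently $e(U)=0$ for every $U$ as soon as two things hold: (a) every periodic Fatou component is simply connected, and (b) no component carries excess branching, that is $\beta_U=0$ for all $U$.

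To establish (a), note that by Lemma~\ref{basic-dynamics-lemma} every immediate attracting or parabolic basin contains a critical point, so the first-return map on a periodic basin has degree at least $2$; the cyclic form of the recursion then forces $e=0$ there. Siegel disks are simply connected by definition, so only Herman rings remain to be excluded. This is exactly where the hypotheses are meant to be used: all poles lying on $\partial\mathcal{A}$ means that every multiple pole, which is a critical point, sits on $\mathcal{J}$ and hence inside no Fatou component, which together with the simple connectivity of $\mathcal{A}$ I would leverage to show that no component can accumulate the extra critical points needed for $\beta_U>0$ or for a Herman ring.

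The main obstacle is precisely item (b): controlling the connectivity of the infinitely many preimage components, i.e.\ proving that pulling $\mathcal{A}$ (and the other simply connected periodic components) back under $R$ never manufactures an annular or higher-connectivity component. It is the critical points concealed at multiple poles that would otherwise be free to over-populate a preimage component and create such connectivity, and the placement of all poles on $\partial\mathcal{A}$ removes exactly this danger by confining those critical points to $\mathcal{J}$. Carrying this out rigorously amounts to balancing the global critical-point budget $2\deg(R)-2$ against the mapping degrees component by component, with the pole condition doing the localization; I expect this bookkeeping to be the genuinely delicate step, while Part~1 and the reduction in Part~2 are routine.
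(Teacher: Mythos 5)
Your first part is correct and is essentially the paper's own argument: both you and the paper first force the degree of $R|_{\mathcal{A}}$ to be at least $2$ (the paper via the critical point guaranteed by Lemma~\ref{basic-dynamics-lemma}, you via Schwarz--Pick, which is fine provided you note that the superattracting case makes $R|_{\mathcal{A}}$ non-injective outright), then send points $w_n\to\infty$ inside $\mathcal{A}$ and use univalence of $R$ near the simple repelling fixed point $\infty$ to conclude that at least one preimage of each $w_n$ in $\mathcal{A}$ stays away from $\infty$ and therefore accumulates at a finite pole on $\partial\mathcal{A}$.

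The second part has a genuine gap, and you have located it yourself: step (b) is never carried out, and the route you propose cannot be closed from the stated hypotheses. The condition that all poles lie on $\partial\mathcal{A}$ constrains only those critical points that happen to be multiple poles; it says nothing about the remaining critical points of $R$ (in the paper's application, $\pm\frac{1}{\sqrt{5}}$ and the roots of $p_\lambda$ are critical points and are not poles). By Lemma~\ref{r-h}, a preimage component $U$ of a simply connected component $V$ is multiply connected exactly when it carries more than $\deg(R|_U)-1$ critical points counted with multiplicity, and nothing in the hypotheses prevents ordinary critical points from accumulating in some far preimage component; so the critical-point bookkeeping does not localize. The paper sidesteps this entirely with a topological separation argument, which is the idea you are missing: since $\mathcal{A}$ is simply connected, $\partial\mathcal{A}$ is a \emph{connected} subset of $\mathcal{J}(R)$ containing $\infty$ and, by hypothesis, every pole. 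If some Fatou component $U$ were multiply connected, one takes a Jordan curve $\gamma\subset U$ whose bounded complementary component meets $\mathcal{J}(R)$; density of the backward orbit of $\infty$ in $\mathcal{J}(R)$ yields a point $z$ surrounded by $\gamma$ with $R^{k}(z)$ a pole for a minimal $k$, and by the open mapping theorem $R^{k}(\gamma)$ is a curve contained in the Fatou set that surrounds that pole but not $\infty$. This would separate the pole from $\infty$ inside the connected Julia continuum $\partial\mathcal{A}$, a contradiction; hence every Fatou component is simply connected and $\mathcal{J}(R)$ is connected. Note that this argument also disposes of Herman rings, which your Riemann--Hurwitz recursion on periodic components does not by itself exclude.
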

\begin{proof}
 Let $s>0$ and $B_s=\{z: \sigma(z, \infty)<s\}$ where $\sigma$ denotes the spherical metric in $\widehat{\mathbb{C}}$. Choose a sufficiently small $s$  such that $B_s$ does not contain any critical value of $R$. This is possible as $\infty$ is a repelling fixed point of $R$. Then the set  $R^{-1}(B_s)$ has $d=\deg(R)$ components one of which, say $N_{0}$ contains   $\infty$. Further, $R$ is one-one on $N_0$. Let all other components of $R^{-1}(B_s)$ be denoted by $N_i, 1\leq i \leq d-1$. Let $w \in (B_s \cap \mathcal{A}) \setminus \{\infty\}$. 
 As the degree of $R: \mathcal{A}  \to \mathcal{A}  $ is at least two (as $\mathcal{A}$ contains at least one critical point by Lemma~\ref{basic-dynamics-lemma}), at least two pre-images of $w$ are in $\mathcal{A} $. Since $R$ is  one-one in $N_{0}$, there is a pre-image of $w$ in $\mathcal{A}  \cap  N_j$ for some $j, 1 \leq j \leq d-1$. This is true for all $s' < s$ and for each $w \in (B_{s'} \cap \mathcal{A} ) \setminus \{\infty\}$. By considering a sequence $s_n \to 0 $ and $w_n \in (B_{s_n} \cap \mathcal{A} ) \setminus \{\infty\}$ so that $w_n$s are distinct and $w_n \to \infty$, we get a sequence $z_n $ in $ \cup_{ 1\leq i \leq d-1} \mathcal{A}  \cap  N_i$. Since   $z_m \neq z_n$ for all $m \neq n$, there is a subsequence $z_{n_k}$ and $j^* \in \{1,2,\cdots, d-1\}$ such that $z_{n_k} \in \mathcal{A} \cap N_j^*$ for all $k$. This subsequence has a limit point and that cannot be anything but a pole of $R$. This pole is clearly in the Julia set of $R$ and thus on the boundary of $\mathcal{A}$.
 \par 
  If all the poles of $R$ are on the boundary of $\mathcal{A}$ and $\mathcal{A}$ is simply connected then the unbounded  component of the Julia set contains all the poles of $R$. Let $U$ be a  multiply connected Fatou component of $R$. Consider a Jordan curve $\gamma$ in $U$ that surrounds a point of the Julia set i.e., the bounded component of $\widehat{\mathbb{C}} \setminus \gamma$ intersects the Julia set. As $\infty \in \mathcal{J}(R)$ and the backward orbit of $\infty$ is dense in the Julia set, there is a point $z$ surrounded by $\gamma$ such that $R^k (z)$ is a pole of $R$. Without loss of generality, assume that $k$ is the smallest natural number such that $R^k (z)$ is a pole. Then the curve $R^{k}(\gamma)$ surrounds a pole of $R$ by the Open mapping theorem. The set $R^k (\gamma)$ is completely  contained in the Fatou set  whereas there is a Julia component containing $\infty$ and all the poles of $R$. This is not possible  proving that all the Fatou components are simply connected. In other words, the Julia set of $R$ is connected. 
\end{proof}

\begin{Remark}
	It can follow  from the arguments used in the above proof that even if $\mathcal{A}$ is not simply connected, all the  Fatou components other that itself  are simply connected whenever the boundary of $\mathcal{A}$
contains all the poles of $R$.	
\end{Remark}
	
\begin{prop}
	Let $p$ be a cubic polynomial.
	\begin{enumerate}
		\item If $p$ is unicritical then its Chebyshev's method is conjugate to $\frac{5z^6+5z^3-1}{9z^5}$ and its Julia set is connected.
		\item If $p$ is not generic  then its Chebyshev's method is conjugate to $\frac{5z^4+15z^3+24z^2+22z+6}{9(z+1)^3}$ and its Julia set is connected.
	\end{enumerate}
\label{lambda=5and6}
\end{prop}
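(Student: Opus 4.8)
The plan is to dispatch both cases by the same two-stage strategy: first reduce each family to a single explicit rational map via the Scaling Theorem (Theorem~\ref{ST}), and then deduce connectedness from the pole criterion of Lemma~\ref{poleinboundary}, whose simple-connectivity hypothesis I verify through a complete census of the critical points.

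For the reduction I use Remark~\ref{cheby-for-cubic} to assume $p(z)=z^3+az+b$. If $p$ is unicritical then $p'(z)=3z^2+a$ must have a single (double) zero, forcing $a=0$; taking $\alpha$ with $\alpha^3=-b$ in the Scaling Theorem reduces $p$ to $z^3-1$, and substituting $a=0,\ b=-1$ into the formula of Remark~\ref{cheby-for-cubic} gives $C_p(z)=\frac{5z^6+5z^3-1}{9z^5}$, of degree $6$ as predicted by Corollary~\ref{degree-corollary}. If $p$ is non-generic and not a monomial it has one double and one simple root; since the roots of a centered cubic sum to zero, the double root $z_0$ and the simple root $-2z_0$ can be sent to $1$ and $-2$ by an affine map, so the Scaling Theorem reduces $p$ to $(z-1)^2(z+2)=z^3-3z+2$. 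Substituting $a=-3,\ b=2$, cancelling the common factor $(z-1)^3$ and reducing the constant yields $C_p(z)=\frac{5z^4+15z^3+24z^2+22z+6}{9(z+1)^3}$, of degree $4$ as in Corollary~\ref{degree-corollary}.

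Next I carry out the critical census using $C_p'(z)=\frac{3p(z)^2(15z^2-a)}{(3z^2+a)^4}$. In both cases $\infty$ is a repelling fixed point by the proposition on the fixed points of $C_p$, and the unique finite pole is $0$ (order $5$) in the first case and $-1$ (order $3$) in the second. In the unicritical case the only critical points are the three superattracting roots and the pole $0$; since $0\mapsto\infty\in\mathcal J(C_p)$, each root's immediate basin contains exactly one critical point, namely that root. In the non-generic case the factor $15z^2-a=3(5z^2+1)$ produces two free critical points $\pm i/\sqrt5$, in addition to the superattracting root $-2$ and the pole $-1$ (which maps to $\infty$); here I claim $\pm i/\sqrt5$ lie in the immediate basin of the attracting root $1$ (its multiplier is $3/8$), so that the superattracting basin $\mathcal A_{-2}$ again contains no critical point other than $-2$. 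Having located all critical orbits, I can then select, in each case, an unbounded invariant immediate basin $\mathcal A$ (the basin of $1$ in the first case, of $-2$ in the second) carrying a single critical point equal to its own superattracting fixed point; this is unbounded because $C_p(z)\sim\frac{5z}{9}$ near the repelling fixed point $\infty$ forces the relevant real ray to converge to that root. Since $\mathcal A$ holds a unique critical point, the Böttcher coordinate extends over all of $\mathcal A$ (equivalently Lemma~\ref{r-h} applies with no further critical points), so $\mathcal A$ is simply connected. As $\infty$ is repelling, the first part of Lemma~\ref{poleinboundary} places a pole on $\partial\mathcal A$, and since there is only one finite pole, every pole lies on $\partial\mathcal A$; the second part of Lemma~\ref{poleinboundary} then gives that $\mathcal J(C_p)$ is connected.

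The main obstacle is the non-generic case: I must rigorously confine the free critical points $\pm i/\sqrt5$ to the basin of $1$ rather than to $\mathcal A_{-2}$, because the simple connectivity feeding Lemma~\ref{poleinboundary} fails if $\mathcal A_{-2}$ hides an extra critical point. I expect to handle this by exhibiting an explicit forward-invariant neighbourhood of $1$ (exploiting the contraction $|3/8|<1$ there) into which the first image $C_p(i/\sqrt5)$ demonstrably falls, the symmetric point $-i/\sqrt5$ being automatic from the symmetry $C_p(\overline z)=\overline{C_p(z)}$ valid for the real polynomial $z^3-3z+2$. A secondary point needing care is confirming that the chosen basin is genuinely unbounded and invariant; this follows from the asymptotics $C_p(z)\sim\frac{5z}{9}$ together with the observation that the only fixed point along the relevant real ray is the target root (the extraneous fixed points, which are $-\tfrac12,-\tfrac32$ in the second case and the points with $z^3=\tfrac14$ in the first, are all repelling and lie off that ray).
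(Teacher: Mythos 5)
Your reductions via the Scaling Theorem, the two explicit formulas, and the critical-point census are all correct, and your treatment of the unicritical case goes through (you route everything through Lemma~\ref{poleinboundary} using the unboundedness of the basin of the root $1$ of $z^3-1$, whereas the paper argues more directly that every Fatou component is simply connected because each immediate basin contains only its own superattracting fixed point and the remaining critical point, the pole $0$, lies in the Julia set; either way works, though your ``$C_p(z)\sim 5z/9$ forces convergence along the ray'' should be replaced by the monotonicity argument: $C_p$ is increasing on $(1,\infty)$ with $C_p(1)=1$ and $C_p(x)<x$ there, and the real extraneous fixed point $4^{-1/3}$ lies below $1$).

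The genuine gap is the step you yourself flag in the non-generic case: you never prove that $\pm i/\sqrt{5}$ lie in $\mathcal{A}_1$ rather than in $\mathcal{A}_{-2}$, and the strategy you propose --- an explicit forward-invariant disc around $1$ into which $C_p(i/\sqrt{5})$ falls --- is left unexecuted and is in fact delicate ($C_p(i/\sqrt{5})\approx 0.72+0.11i$ sits at distance about $0.3$ from $1$, and crude bounds on $\bigl|\frac{5z^3+11z^2+8z+3}{9(z+1)^3}\bigr|$ on a disc of that radius do not immediately give contraction). The missing idea is soft: $1$ is a double root of $p$, hence an attracting but \emph{not} superattracting fixed point of $C_p$ (multiplier $3/8$), so by Lemma~\ref{basic-dynamics-lemma} its immediate basin must contain a critical point; since $-1$ maps to $\infty\in\mathcal{J}(C_p)$ and $-2$ lies in its own basin, that critical point can only be one of $\pm i/\sqrt{5}$, and the symmetry $C_p(\overline z)=\overline{C_p(z)}$ (which you already invoke) upgrades one to both. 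This is exactly how the paper closes the argument, after which $\mathcal{A}_{-2}$ contains no critical point other than $-2$, is simply connected, is unbounded by the real-ray monotonicity argument, carries the unique pole $-1$ on its boundary, and Lemma~\ref{poleinboundary} finishes the proof. With that substitution your proof is complete; without it, the simple connectivity of $\mathcal{A}_{-2}$ --- the hypothesis Lemma~\ref{poleinboundary} needs --- is unverified.
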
 
\begin{proof}
	 \begin{enumerate}
	 	\item Let $p(z)=(z-\alpha)^3+\beta$ for some $\alpha, \beta \in \mathbb{C}, \beta\neq 0$. 	If $\beta =r e^{i \theta}$ for $r>0$ and $\theta \in [0, 2 \pi )$ then $(p\circ T)(z)=-re^{i \theta} ( z^3 -1)$ where $T(z)= -r^{\frac{1}{3}} e^{i\frac{\theta}{3}}z+\alpha$. In view of the Scaling theorem, we assume without loss of any generality that  $p(z)=z^3-1$. Its Chebyshev's method is
	$$C_p(z)=\frac{5z^6+5z^3-1}{9z^5}~\mbox{and }~
	C_p'(z)=\frac{5(z^3-1)^2}{9z^6}.$$

Note that a rational map with degree $d$ has $2d-2$ critical points counting multiplicity.
	As $\deg(C_p)=6$, there are ten critical points counted with multiplicities and those are the three roots of $p$  each with multiplicity two and the pole $0$ with multiplicity four. 

	 As $\infty$ is a repelling fixed point, it is in the Julia set of $C_p$ giving that  $0\in \mathcal{J}(C_p).$ 
	Hence none of the  superattracting immediate basins contains any critical point other than the superattracting fixed point. Hence each immediate basin is simply connected by Theorem 3.9 ~\cite{Milnor}. These are the only periodic Fatou components by Lemma~\ref{basic-dynamics-lemma}. Every Fatou component different from these are simply connected by the Riemann-Hurwitz formula (Lemma~\ref{r-h}).
	\item Let $p(z)=(z-a)^2(z-b)$ for some $a,b \in \mathbb{C}$ and $a\neq b$. Then for the affine map $T(z)=\frac{a-b}{3}z+\frac{2a+b}{3}$, $\left(\frac{3}{a-b}\right)^3p(T(z))=(z-1)^2(z+2).$ In view of the Scaling theorem, we  assume without loss of generality that $p(z)=(z-1)^2(z+2).$ Then
	$$C_p(z)=\frac{5z^4+15z^3+24z^2+22z+6}{9(z+1)^3}$$ and  $C_p'(z)=\frac{(z+2)^2(5z^2+1)}{9(z+1)^4}.$ The critical points are $ -1, -2$ and $\pm \frac{i}{\sqrt{5}}$. Also $-2$ is a superattracting fixed point of $C_p$ whereas $1$ is an attracting fixed point.  Let $\mathcal{A}_{-2}$ and $\mathcal{A}_1$ be the immediate basins of attraction of $-2$ and $1$ respectively. Note that $-1 \in \mathcal{J}(C_p)$.  Since $\mathcal{A}_1$ must contain a critical point of $C_p$, it is either $\frac{i}{\sqrt{5}}$ or $-\frac{i}{\sqrt{5}}$.
Since all the coefficients of $C_p$ are real, $C_{p}^n (\overline{z})=\overline{C_{p}^n (z)}$ for all $n$ and $z \in \mathbb{C}$. This gives that each Fatou component intersecting the real line is symmetric with respect to $\mathbb{R}$. In particular, $\mathcal{A}_1$ is symmetric with respect to $\mathbb{R}$. This gives  that $\mathcal{A}_1 $ contains both these critical points $\pm\frac{i}{\sqrt{5}}$. Thus the only periodic Fatou components of $C_\lambda$ are $\mathcal{A}_1$ and $\mathcal{A}_{-2}$, by Lemma~\ref{basic-dynamics-lemma}.
	\par  If  $C_p(x)< x$ for any $x<-2$, then strictly increasingness of $C_p$ in $(-\infty,-2)$ will give that $\lim_{n \to \infty}C_p ^n (x) = \infty$, which is not possible as $\infty$ is a repelling fixed point. Therefore, $C_p(x)> x$ for all $x<-2$ and $\lim_{n \to \infty}C_p ^n (x) = -2$. In other words, $(-\infty, 2) \subset \mathcal{A}_{-2}$ showing that $\mathcal{A}_{-2}$ is unbounded. By Lemma~\ref{poleinboundary}, there is a pole of $C_p$ on the boundary of $\mathcal{A}_{-2} $. But $C_p$ has only one pole.  As $\mathcal{A}_{-2}$ does not contain any critical point other than $-2$, it is simply connected (Theorem 3.9, \cite{Milnor}). Now it follows from Lemma~\ref{poleinboundary}
 that the Julia set of $C_p$ is connected.

	 \end{enumerate}
	\begin{figure}[h!]
		\begin{subfigure}{.5\textwidth}
			\centering
			\includegraphics[width=0.9\linewidth]{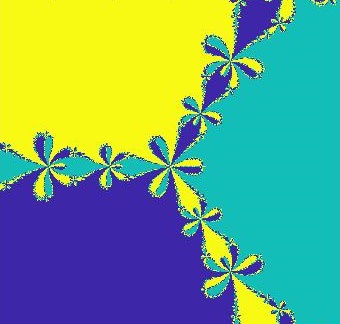}
			\caption{Unicritical: $p(z)=z^3-1$}
		\end{subfigure}
		\begin{subfigure}{.5\textwidth}
			\centering
			\includegraphics[width=0.9\linewidth]{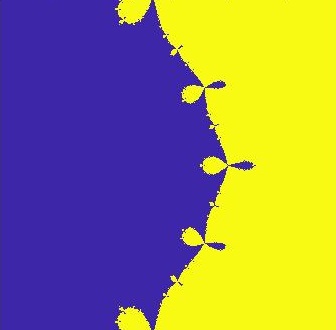}
			\caption{Non-generic: $p(z)=(z-1)^2(z+2)$}
		\end{subfigure}
		\caption{The Julia set of $C_p$ for unicritical and non-generic $p$ }
		\label{unicritical-nongeneric}
	\end{figure}
\end{proof}
The Fatou set of the unicritical polynomial $z^3 -1$ is given in Figure~\ref{unicritical-nongeneric}(a). The three superattracting basins are shown in blue, yellow and green.  The two basins of the roots of the non-generic polynomial $(z-1)^2(z+2)$ are shown in yellow and blue in Figure~\ref{unicritical-nongeneric}(b).
\begin{Remark} The Chebyshev's method of each unicritical cubic polynomial has three finite extraneous fixed points, each with multiplier $5$. For each non-generic cubic polynomial $p$, the finite extraneous fixed points of $C_p$ are with multipliers $9$ and $\frac{49}{9}$. This is because the multipliers of fixed points remain unchanged under conformal conjugacy.
\end{Remark}
Note that $\infty$ is always an extraneous fixed point of $C_p$ and its multiplier is $\frac{2 d^2}{2 d^2 -3d+1}$ where $d=\deg(C_p)$.
In order to deal with all non-unicritical and generic cubic polynomials, we use a parameterization in terms of the multiplier of a finite extraneous fixed point of the Chebyshev's method. The forbidden value  $5$  in the following lemma corresponds precisely to unicritical  polynomials whereas there is no cubic polynomial whose Chebyshev's method has a finite extraneous fixed point with multiplier $6$. In fact, the multiplier of $\infty$ can also never be $6$.
	\begin{lemma}\label{characterization}
	For  $\lambda \in \mathbb{C}- \{5,6\}$, if $p$ is a  non-unicritical and generic cubic polynomial  whose Chebyshev's method $C_p$ has a finite extraneous fixed point with multiplier $\lambda$ then $C_p$ is conjugate to the Chebyshev's method of $p_{\lambda}(z)=z^3+3z+\psi(\lambda)$, where $\psi(\lambda)=\frac{3\lambda^2-39\lambda+124}{(5-\lambda)\sqrt{5-\lambda}}$ and $\sqrt{5-\lambda}$ denotes the principal branch.
\end{lemma}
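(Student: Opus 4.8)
The plan is to exploit the Scaling Theorem (Theorem~\ref{ST}) to reduce to the normal form $p(z)=z^3+az+b$ of Remark~\ref{cheby-for-cubic}, where non-unicriticality forces $a\neq 0$ (the critical points of $p$ are $\pm\sqrt{-a/3}$, and these coincide exactly when $a=0$). By the Proposition on the fixed points of $C_p$, a finite extraneous fixed point $\zeta$ is a root of $L_p(z)+2=0$; clearing denominators in $L_p(z)=\frac{6z(z^3+az+b)}{(3z^2+a)^2}=-2$ shows that the finite extraneous fixed points are precisely the roots of the quartic $12z^4+9az^2+3bz+a^2=0$. One checks $\zeta\neq 0$, since $L_p(0)=0\neq -2$.

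Next I would read off the multiplier. Because $L_p(\zeta)=-2$, Equation~(\ref{deri}) gives $\lambda=C_p'(\zeta)=2\bigl(3-L_{p'}(\zeta)\bigr)$, and substituting $L_{p'}(\zeta)=\frac{3\zeta^2+a}{6\zeta^2}$ collapses this to the clean relation $\lambda=5-\frac{a}{3\zeta^2}$, so that $\zeta^2=\frac{a}{3(5-\lambda)}$. This already explains the two forbidden values: $\lambda=5$ forces $a=0$, the unicritical case; while $\lambda=6$ gives $\zeta^2=-a/3$, i.e.\ $p'(\zeta)=0$, so $\zeta$ would be a critical point of $p$ and $L_p$ would have a pole there rather than the value $-2$ --- impossible for a generic, non-unicritical $p$. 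Hence $\lambda\notin\{5,6\}$ is exactly the admissible range.

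The heart of the computation is to pin down $b$. Using the quartic, $3b\zeta=-(12\zeta^4+9a\zeta^2+a^2)$; substituting $\zeta^2=\frac{a}{3(5-\lambda)}$ and factoring out $a^2$ reduces the bracket to $a^2\cdot\frac{4+9(5-\lambda)+3(5-\lambda)^2}{3(5-\lambda)^2}$, whose numerator is precisely $3\lambda^2-39\lambda+124$ --- this algebraic identity is the one genuinely computational step. Finally I would apply the scaling $T(z)=\alpha z$ with $\alpha^2=a/3$, which normalizes the linear coefficient to $3$ and sends the constant term to $b'=b/\alpha^3$. Substituting the expression for $b$ together with $\zeta=\sqrt{a/(3(5-\lambda))}$, the factors of $\sqrt{a}$ cancel, leaving $b'=\pm\frac{3\lambda^2-39\lambda+124}{(5-\lambda)\sqrt{5-\lambda}}=\pm\psi(\lambda)$. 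Thus $C_p$ is conjugate to $C_{z^3+3z\pm\psi(\lambda)}$, and since $z\mapsto -z$ conjugates $C_{z^3+3z+c}$ to $C_{z^3+3z-c}$ (again by the Scaling Theorem, with $\alpha=-1$), the sign may be fixed to conclude that $C_p$ is conjugate to $C_{p_\lambda}$.

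The main obstacle is the bookkeeping of square-root branches: the choices of sign for $\alpha$ and for $\zeta$ are entangled, and one must verify that, whatever consistent branches are chosen, the $\sqrt{a}$ dependence cancels so that $b'$ depends on $\lambda$ alone, with the residual sign ambiguity harmlessly absorbed by the $z\mapsto -z$ symmetry. Confirming the identity $4+9(5-\lambda)+3(5-\lambda)^2=3\lambda^2-39\lambda+124$ and tracking the principal branch of $\sqrt{5-\lambda}$ through $\zeta$ are the only places where care is required; everything else is forced by the normalization.
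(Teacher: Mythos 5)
Your proposal is correct and follows essentially the same route as the paper: normalize to $p(z)=z^3+az+b$, derive $\lambda=5-\frac{a}{3\zeta^2}$ from the multiplier formula, solve the quartic $12z^4+9az^2+3bz+a^2=0$ for $b=\pm\frac{a\sqrt a}{3\sqrt 3}\psi(\lambda)$, and rescale by $z\mapsto\frac{\sqrt a}{\sqrt 3}z$, with the sign ambiguity absorbed by the $z\mapsto -z$ conjugacy. The only cosmetic difference is that you make the algebraic identity $4+9(5-\lambda)+3(5-\lambda)^2=3\lambda^2-39\lambda+124$ and the branch bookkeeping explicit, which the paper leaves implicit.
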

\begin{proof}
Let $p$ be a non-unicritical and generic cubic polynomial. Then $p(z)=z^3+az+b$ where $ a \neq 0, b \in \mathbb{C}$  and all the roots of $p$ are simple. Further, 
	$ L_p(z)=\frac{6z(z^3+az+b)}{(3z^2+a)^2}~\mbox{and}~ L_{p'}(z)= \frac{3z^2+a}{6z^2}$. 
	If $z$ is a finite extraneous fixed point of $C_p$ with multiplier $\lambda$ then, in view of Equation~(\ref{cubic-Cheby-deri}),
	\begin{equation}
	5-\frac{a}{3z^2}=\lambda \label{e1}.
	\end{equation} 
Note that the above equation has no finite solution for $\lambda=5$.  Now  either $\sqrt{\frac{a}{3(5-\lambda)}}$ or $-\sqrt{\frac{a}{3(5-\lambda)}}$ is the extraneous fixed point of $C_p$. Recall that, a fixed point $z$ of $C_p$ is extraneous if and only if $p(z)\neq 0$ and 
	$L_p(z)=-2\label{i}$. Since $p$ is cubic, generic and non-unicritical, neither $p$ and $p'$ nor $p'$ and $p''$ have any common root. A finite extraneous fixed point of $C_p$ is a solution of 
	\begin{equation}
	 \frac{p(z)p''(z)}{[p'(z)]^2}=-2,
	\label{LP}\end{equation} and any such solution is neither a root of $p$ nor a root of $p'$. Equation~(\ref{LP}) becomes
	\begin{equation}
	12 z^4+9a z^2+3b z +a^2=0.\label{iii}
	\end{equation}
For $\lambda =6$, the point $ \pm \sqrt{\frac{a}{3(5-\lambda)}}=\pm i \sqrt{\frac{a}{3}} $ becomes a root of $p'$ and hence has been avoided. 
\par 
Considering $-\sqrt{\frac{a}{3(5-\lambda)}}$ to be an extraneous fixed point, from Equation (\ref{iii}), we have
	$b=\frac{a\sqrt{a}}{3\sqrt{3}}\psi(\lambda)$. Similarly assuming that 
 $\sqrt{\frac{a}{3(5-\lambda)}}$ is the extraneous fixed point we have $b=-\frac{a\sqrt{a}}{3\sqrt{3}}\psi(\lambda)$.
	
	Let $p_1(z)=z^3+az+\frac{a\sqrt{a}}{3\sqrt{3}}\psi(\lambda)$ and $p_2(z)=z^3+az-\frac{a\sqrt{a}}{3\sqrt{3}}\psi(\lambda)$. Then $p_1(z)=-p_2(-z)$ and by the Scaling Theorem, $C_{p_1}$ and $C_{p_2}$ are conformally (in fact, affine) conjugate.
	Now, for $\phi(z)=\frac{\sqrt{a}}{\sqrt{3}}z$, $p_1(\phi(z))=\frac{a\sqrt{a}}{3\sqrt{3}}(z^3+3z+\psi(\lambda))$. Again applying the Scaling Theorem, we conclude that the Chebyshev's methods applied to $p_1$ and $z^3+3z+\psi(\lambda)$ are conjugate.
\end{proof}
\begin{Remark}
	\begin{enumerate}
		
			\item The extraneous fixed point of $C_\lambda, \lambda \neq 5,6$ having its multiplier equal to $\lambda$ is $-\frac{1}{\sqrt{5-\lambda}}$. 
		\item If $\lambda =5$  then Equation (\ref{e1}) gives that  $a=0 $ and $p(z)=z^3+b$ becomes an unicritical polynomial.
		\item For $\lambda =6$, the point that qualifies to be a finite extraneous fixed point of $C_p$ is $\pm i \sqrt{\frac{a}{3}}$. But $p' $ is zero  whereas $p''$ is not zero at this point. It cannot be a solution of Equation~(\ref{LP}).  This gives that there is no extraneous fixed point of $C_p$ with multiplier equal to $6$ for any non-unicritical and generic polynomial $p$. As seen in Proposition~\ref{lambda=5and6} and the remark following it, there is also no unicritical or non-generic cubic polynomial with an extraneous fixed point with multiplier equal to $6$. 
	\item  Since $\psi'(\lambda)=-\frac{3}{2} \frac{(6-\lambda)(1-\lambda)}{(5-\lambda)^2 \sqrt{5-\lambda}}$, $\psi'(\lambda) <0$ for all $\lambda  < 1$ and is $>0$ for all $1< \lambda <5$.	The function $\psi: (-\infty, 5) \to  [11, \infty)$ is  strictly decreasing in $(-\infty, 1)$, attains its minimum at $1$ and then it strictly increases. The minimum value is $\psi(1)=11$. See Figure~\ref{psi-lambda} for the graph of $\psi$.
\item \label{lambda<5}For all $\lambda  < 5$, $\psi(\lambda)$ is a real number and $p_\lambda(z)=z^3+3z+\psi(\lambda)$ preserves the real axis. In fact, all the coefficients in the numerator and the denominator of $C_\lambda$ are real and therefore $C_{\lambda}(\overline{z})=\overline{C_{\lambda}(z)}$ for all $z$. This gives that $C_{\lambda}^n(\overline{z})=\overline{C_{\lambda}^n(z)}$ for all $n$. In other words, the Fatou set of $C_\lambda$ is symmetric about the real line. If a Fatou component of $C_\lambda$ intersects the real line then it is also symmetric about the real line. 
	\end{enumerate}
\label{5-6}
\end{Remark}
	\begin{figure}[h!]
	\centering
	\includegraphics[width=0.6\linewidth]{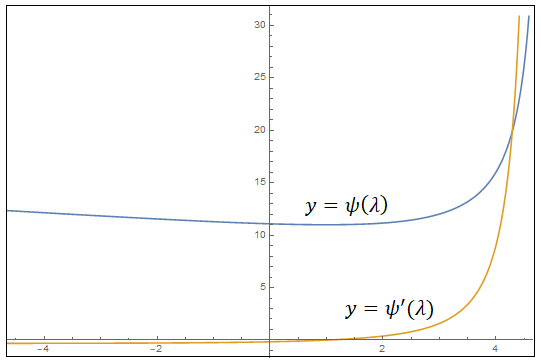}
	\caption{Graph of $\psi$}
	\label{psi-lambda}
\end{figure}
Now onwards, we consider $p_\lambda (z)=z^3 +3z+\psi(\lambda)$ and let  $C_{p_\lambda}$ be denoted by $C_\lambda$ for $\lambda \in \mathbb{C} \setminus \{5,6\}$. Then $L_{p_\lambda}(z)=\frac{p_\lambda (z) 2z}{3(z^2 +1)^2}$, 
 \begin{equation}
C_\lambda (z)=\frac{5z^7 +6z^5-5\psi(\lambda)z^4-3z^3-12 \psi(\lambda) z^2 -\psi(\lambda)^2 z-3 \psi(\lambda)}{9 (z^2 +1)^3},
\label{cheby-parameter} \end{equation}
 and
 \begin{equation}
 C_{\lambda}' (z)=\frac{[ p_{\lambda}(z) ]^2 (5z^2 -1)}{9 (z^2 +1)^4}.\label{cheby-prime-parameter}
    \end{equation}

    The following are some consequences of the above expressions.

\begin{lemma}\label{extra_psi}
	\begin{enumerate}
	 
		\item  
		 For $\lambda <5$, the polynomial $p_\lambda$ has a unique real root $r_\lambda$ and the other two roots are complex conjugates of each other.
	
		\item For $\lambda \in [-1,1)$, in addition to $-\frac{1}{\sqrt{5-\lambda}}$ there are three extraneous fixed points, one is real, we denote it by $\alpha_{\lambda}$ and the other two are complex conjugates of each other.  
		\item For $\lambda =1$, the extraneous fixed point $-\frac{1}{2}$ is multiple with multiplicity two and the other two are complex conjugates of each other. 
			\item For $\lambda \in [-1,1]$, all the three roots of $p_\lambda$ and the poles $\pm i$ are critical points of $C_\lambda$ each with multiplicity two. The other two simple critical points of $C_{\lambda}$ are $\frac{1}{\sqrt{5}}$ and $-\frac{1}{\sqrt{5}}$.
	\end{enumerate}
\label{basic}
\end{lemma}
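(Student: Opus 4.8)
The plan is to settle the four parts separately, each time turning a count-of-roots question into an elementary monotonicity or explicit-factorization argument, so that no discriminant machinery is needed. Throughout I use from Remark~\ref{5-6} that $\psi(\lambda)$ is real for $\lambda<5$, that $-\frac{1}{\sqrt{5-\lambda}}$ is the extraneous fixed point with multiplier $\lambda$, and that $\psi(\lambda)\ge 11$ on the relevant range with $\psi(1)=11$.

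For part (1), since $\lambda<5$ the coefficients of $p_\lambda(z)=z^3+3z+\psi(\lambda)$ are real, and $p_\lambda'(z)=3(z^2+1)>0$ for every real $z$. Hence $p_\lambda$ restricted to $\mathbb{R}$ is strictly increasing and has exactly one (simple) real zero $r_\lambda$; the remaining two zeros are non-real, and being roots of a real polynomial they are complex conjugates. For part (2), I would recall that the finite extraneous fixed points are the roots of Equation~(\ref{iii}) with $a=3,\ b=\psi(\lambda)$, i.e. of $q(z):=4z^4+9z^2+\psi(\lambda)z+3$. Writing $t=(5-\lambda)^{-1/2}>0$, the known root $-t$ lets me factor $q(z)=(z+t)h(z)$ with $h(z)=4z^3-4tz^2+(9+4t^2)z+\tfrac{3}{t}$, where the constant term is forced by $q(0)=3$ and the consistency relation $\psi=4t^3+9t+\tfrac{3}{t}$ (which is exactly the statement $q(-t)=0$). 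The crucial computation is $h'(z)=12z^2-8tz+(9+4t^2)$, whose discriminant $64t^2-48(9+4t^2)=-128t^2-432$ is negative; thus $h'>0$, $h$ is strictly increasing on $\mathbb{R}$, and so has exactly one real root $\alpha_\lambda$ together with a complex-conjugate pair. That $\alpha_\lambda\neq -t$ for $\lambda\in[-1,1)$ follows from $q'(-t)=-16t^3-18t+\psi=-\tfrac{3}{t}(4t^4+3t^2-1)=-\tfrac{3}{t}(4t^2-1)(t^2+1)$, which vanishes only at $t=\tfrac12$, i.e. $\lambda=1$; since $t\mapsto(5-\lambda)^{-1/2}$ is increasing, $t<\tfrac12$ for $\lambda<1$, so $-t$ is a simple root of $q$.

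Part (3) reuses this at $\lambda=1$, where $t=\tfrac12$ and $\psi(1)=11$: the formula above gives $q'(-\tfrac12)=0$, so $-\tfrac12$ is a multiple root, and I would then display the exact factorization $q(z)=(2z+1)^2(z^2-z+3)$; as $z^2-z+3$ has discriminant $-11<0$, the root $-\tfrac12$ has multiplicity exactly two and the other two roots form a conjugate pair. For part (4), I read the critical points off Equation~(\ref{cheby-prime-parameter}): $C_\lambda'(z)=\frac{[p_\lambda(z)]^2(5z^2-1)}{9(z^2+1)^4}$ vanishes to order two at each of the three simple roots of $p_\lambda$ (critical points of multiplicity two) and to order one at $\pm\frac{1}{\sqrt5}$ (simple critical points), and these last two are not roots of $p_\lambda$ because $\psi(\lambda)\ge 11$ keeps $p_\lambda(\pm\tfrac{1}{\sqrt5})\neq 0$. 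The poles $\pm i$ are handled through the denominator $9(z^2+1)^3$ in Equation~(\ref{cheby-parameter}): evaluating the numerator at $i$ yields $(4-\psi^2)i+4\psi\neq 0$ for real $\psi$, so $\pm i$ are poles of order three, hence critical points of multiplicity two. Completeness is then confirmed by the count $3\cdot 2+2\cdot 2+2\cdot 1=12=2\deg(C_\lambda)-2$, which simultaneously shows $\infty$ is not critical.

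I expect the only delicate point to be the multiplicity bookkeeping linking parts (2) and (3): one must verify that the real root $-t$ furnished by Remark~\ref{5-6} and the real root $\alpha_\lambda$ of the cubic factor stay distinct for $\lambda<1$ and coalesce precisely at $\lambda=1$. This is exactly governed by the factor $(4t^2-1)$ in $q'(-t)$, so the monotonicity of $t\mapsto(5-\lambda)^{-1/2}$ pins the coalescence to $\lambda=1$; everything else is a direct reading of the formulas for $C_\lambda$ and $C_\lambda'$.
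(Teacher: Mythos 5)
Your proof is correct and follows essentially the same route as the paper: the same monotonicity argument for $p_\lambda$, the same factorization of the quartic $4z^4+9z^2+\psi(\lambda)z+3$ into $(z+\tfrac{1}{\sqrt{5-\lambda}})$ times a cubic whose derivative has negative discriminant (your $h$ is the paper's $q$ after the substitution $t=(5-\lambda)^{-1/2}$), the same explicit roots at $\lambda=1$, and the same reading of the critical points from $C_\lambda'$. Your extra checks --- that $-t$ is a simple root of the quartic for $\lambda<1$ via $q'(-t)=-\tfrac{3}{t}(4t^2-1)(t^2+1)$, and the $2\deg(C_\lambda)-2=12$ count confirming the critical-point list is complete --- are welcome refinements the paper leaves implicit, not a different method.
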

\begin{proof}
	\begin{enumerate}
		\item  Since $p_\lambda$ is monic and  is of odd degree and also preserves the real line,  $\lim_{x \to \infty}p_{\lambda}(x)=\infty$ and $\lim_{x \to -\infty}p_{\lambda}(x)=-\infty$.  Since $p_\lambda '(x)>0$ for all $x \in \mathbb{R}$, it is strictly increasing and hence it has a unique real root. Clearly the other two roots are complex conjugates of each other as  all the coefficients of $p_\lambda$ are real for all $\lambda <5$.
		\item Putting $a=3$ and $b =\psi(\lambda)$ in Equation~(\ref{iii}) we get that the extraneous fixed points of $C_{\lambda}$ are the solutions of 
		\begin{equation}
		 4z^4+9z^2+\psi(\lambda)z +3=0. 
		 \label{(iii)extraneous}
		\end{equation}  These are nothing but the solutions of  $(z+\frac{1}{\sqrt{5-\lambda}})q(z)=0$ where $q(z)=4z^3-\frac{4z^2}{\sqrt{5-\lambda}}+\frac{(49-9 \lambda )z}{5-\lambda}+ 3 \sqrt{5-\lambda}$. Note that $q'(z)=12z^2-\frac{8z}{\sqrt{5-\lambda} }+\frac{ 49-9 \lambda }{5-\lambda}$ and its discriminant is $-16(27+\frac{8}{5-\lambda}),$ and that is negative for $-1\leq \lambda < 1$. The two roots of $q'$ are non-real. This gives that $q'(z) \neq 0$ and  is either positive or negative for all $z \in \mathbb{R}$, i.e., $q$ is either strictly increasing or strictly decreasing on the real line. Since $q: \mathbb{R} \to \mathbb{R}$,  $\lim_{z \to -\infty}q(z) = -\infty$ and $\lim_{z \to  \infty}q(z) =  \infty$, $q$ is strictly increasing on $\mathbb{R}$   and  hence has  a unique real root. This is the other real extraneous fixed point $\alpha_\lambda$ of $C_{\lambda}$. As all the coefficients of $q$ are real, the other two roots are  complex conjugates of each other which are nothing but the non-real extraneous fixed points of $C_\lambda$ for $\lambda \in [-1,1)$.
		\item For $\lambda =1$, it follows from Equation~(\ref{(iii)extraneous}) that $-\frac{1}{2}$ is a double root of $4z^4 +9z^2+11z+3=0$. The other two roots are found to be  $\beta_1 =\frac{1}{2}( 1+i\sqrt{11}) $ and $\beta_2 =\frac{1}{2}( 1-i\sqrt{11}) $. These are the extraneous fixed points of $C_1$.
	\item Since $p_\lambda$ is generic, its three roots are simple and are superattracting fixed points of $C_\lambda$, each of which is a critical point with multiplicity two. The rest follows from Equation~(\ref{cheby-parameter}) and Equation~(\ref{cheby-prime-parameter}).	\end{enumerate}
\end{proof}
Some estimates are going to be useful.
\begin{lemma}
	Let $-1\leq 
	\lambda \leq 1$. 
  
  \begin{enumerate}
  	\item If $r_\lambda$ is the real root of $p_\lambda$  then $-2 < r_\lambda < -1$ and 
  	$C_\lambda (\frac{1}{\sqrt{5}}) < r_\lambda$. Further, there is $x_0 \in (-\frac{1}{\sqrt{5}},0)$ such that $C_\lambda (x_0)=r_\lambda$.
  	\item If $-1 \leq \lambda <1$ and $\alpha_\lambda$ is the real extraneous fixed point of $C_\lambda$ different from $-\frac{1}{\sqrt{5-\lambda}}$ then   $-\frac{2}{\sqrt{5-\lambda}} < \alpha_\lambda <  -\frac{1}{\sqrt{5-\lambda}}$. For $\lambda =1$, $\alpha_\lambda = -\frac{1}{\sqrt{5-\lambda}}$.
  \end{enumerate} 
	\label{real-extraneous}  
\end{lemma}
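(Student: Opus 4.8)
The plan is to reduce every assertion to the sign of an explicit polynomial or rational expression, exploiting that (abbreviating $\psi=\psi(\lambda)$) Remark~\ref{5-6}(4) makes $\psi$ decreasing on $[-1,1]$, so $11=\psi(1)\le \psi(\lambda)\le \psi(-1)=\tfrac{166}{6\sqrt6}$; in particular $11\le\psi(\lambda)<12$ throughout, since $6\sqrt6>14.4$ forces $\tfrac{166}{6\sqrt6}<12$. These are the only numerical facts the first part needs. For the location of $r_\lambda$, recall from Lemma~\ref{basic}(1) that $p_\lambda$ is strictly increasing on $\mathbb{R}$ with a unique real root, so it suffices to check the signs at the endpoints: since $p_\lambda(-1)=\psi-4\ge 7>0$ and $p_\lambda(-2)=\psi-14<0$, we get $-2<r_\lambda<-1$.

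For the inequality $C_\lambda(\tfrac{1}{\sqrt5})<r_\lambda$ I would evaluate formula~(\ref{cheby-parameter}) at $z=\tfrac{1}{\sqrt5}$ (where $z^2=\tfrac15$ and $(z^2+1)^3=\tfrac{216}{125}$). Grouping the terms carrying a factor $\tfrac{1}{\sqrt5}$ separately from the rational ones yields
\[
C_\lambda\!\left(\tfrac{1}{\sqrt5}\right)=\frac{125}{1944}\left[-\frac{1}{\sqrt5}\Big(\psi^2+\tfrac{8}{25}\Big)-\frac{28\psi}{5}\right],
\]
whose bracket is less than $-\tfrac{\psi^2}{\sqrt5}\le-\tfrac{121}{\sqrt5}$; since $\tfrac{121}{\sqrt5}>\tfrac{2\cdot1944}{125}$, this gives $C_\lambda(\tfrac{1}{\sqrt5})<-2<r_\lambda$. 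To produce $x_0$, I would look at $C_\lambda$ on $[-\tfrac{1}{\sqrt5},0]$, an interval free of the poles $\pm i$ and hence of continuity. At the endpoints, $C_\lambda(0)=-\tfrac{\psi}{3}\le-\tfrac{11}{3}<-2<r_\lambda$, while the analogous evaluation at $z=-\tfrac{1}{\sqrt5}$ (the odd-power terms now reversing sign) gives $C_\lambda(-\tfrac{1}{\sqrt5})>-1>r_\lambda$; the latter reduces to a quadratic inequality of the form $\psi^2-\tfrac{28}{\sqrt5}\psi+(\text{const})>0$, whose larger root is below $11$, hence valid for all $\psi\ge11$. The intermediate value theorem then supplies $x_0\in(-\tfrac{1}{\sqrt5},0)$ with $C_\lambda(x_0)=r_\lambda$.

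For the second part I would work with the strictly increasing cubic $q$ from Lemma~\ref{basic}(2), whose unique real root is $\alpha_\lambda$, so once more it is enough to control the signs of $q$ at the two proposed endpoints. Writing $t=\sqrt{5-\lambda}$ (so $t^2\in(4,6]$ for $\lambda\in[-1,1)$ and $t^2=4$ at $\lambda=1$) and clearing $t^3>0$, the two evaluations collapse to the clean factorizations
\[
t^3\,q\!\left(-\tfrac{1}{t}\right)=3(t^2-4)(t^2+1),\qquad t^3\,q\!\left(-\tfrac{2}{t}\right)=3t^4-18t^2-56.
\]
The first is positive exactly when $t^2>4$, i.e. when $\lambda<1$, and vanishes at $\lambda=1$; the second, read as an upward parabola in $u=t^2$, takes the values $-80$ at $u=4$ and $-56$ at $u=6$, hence stays negative on all of $[4,6]$ by convexity. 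Thus $q(-\tfrac{2}{t})<0<q(-\tfrac{1}{t})$ for $\lambda<1$, and monotonicity of $q$ gives $-\tfrac{2}{\sqrt{5-\lambda}}<\alpha_\lambda<-\tfrac{1}{\sqrt{5-\lambda}}$; at $\lambda=1$ the vanishing of $q(-\tfrac{1}{t})$ identifies the unique real root as $\alpha_\lambda=-\tfrac{1}{t}=-\tfrac12$, matching Lemma~\ref{basic}(3).

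The computations are routine once assembled; the only genuine work is bookkeeping the evaluations of $C_\lambda$ and of $q$ and spotting the factorizations that turn each endpoint estimate into a transparent sign condition in $\lambda$ (equivalently in $t^2=5-\lambda$). The main obstacle I anticipate is keeping the $\sqrt5$- and $\sqrt{5-\lambda}$-dependent terms organized so that the resulting inequalities are manifestly uniform over the whole range $\lambda\in[-1,1]$; the standing bound $\psi\ge11$ is precisely what makes every one of them hold with room to spare.
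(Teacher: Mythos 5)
Your proposal is correct and follows essentially the same route as the paper: sign checks of the strictly increasing $p_\lambda$ at $-2$ and $-1$, explicit evaluation of $C_\lambda$ at $\pm\tfrac{1}{\sqrt5}$ and $0$ combined with the intermediate value theorem (the paper phrases this via the polynomial $S(z)=9(z^2+1)^3(C_\lambda(z)-r_\lambda)$, which is sign-equivalent to your direct comparison), and sign checks of the increasing cubic $q$ at $-\tfrac{2}{\sqrt{5-\lambda}}$ and $-\tfrac{1}{\sqrt{5-\lambda}}$. Your substitution $t=\sqrt{5-\lambda}$ yielding the factorization $t^3q(-\tfrac1t)=3(t^2-4)(t^2+1)$ is a slightly cleaner bookkeeping of the same computation the paper does in terms of $\lambda$.
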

\begin{proof}
	For $-1 \leq \lambda \leq 1, 11 \leq \psi(\lambda) \leq  \psi(-1) \approx 11.294$. 

	\begin{enumerate}
		\item 

Note that $p_\lambda (-1)=-4+\psi(\lambda)>0 $ and $p_\lambda (-2)=-14+\psi(\lambda) <0$ for $-1 \leq \lambda \leq 1$. This gives that $-2 < r_\lambda <-1$. For  $-1 \leq  \lambda \leq 1$,  $\psi(\lambda)\geq 11$ and we have $C_\lambda (\frac{1}{\sqrt{5}})=-\frac{\sqrt{5}[25\psi(\lambda)^2+140\sqrt{5}\psi(\lambda)+8]}{1944} <-7.449$. Consequently, we have $C_\lambda (\frac{1}{\sqrt{5}}) < r_\lambda$. 
	
	\par It follows from Equation~(\ref{cheby-parameter}) that $C_\lambda (z)=r_\lambda$ if and only if $S(z)=5z^7 +6z^5-5\psi(\lambda)z^4-3z^3-12 \psi(\lambda) z^2 -\psi(\lambda)^2 z-3 \psi(\lambda)-9 r_\lambda  (z^2 +1)^3=0$. Note that $S (0) =-3( \psi(\lambda)+3 r_\lambda)<0 $ (because $\psi(\lambda)  \geq 11$). Further, $S(-\frac{1}{\sqrt{5}})=  \frac{8}{25 \sqrt{5}}-\frac{1944 r_\lambda}{125}+\frac{\psi(\lambda)}{\sqrt{5}}(\psi(\lambda)-\frac{28}{\sqrt{5}})$.   Note that  $15< \frac{8}{25 \sqrt{5}}-\frac{1944 r_\lambda}{125} < 31$ and the last term  $\frac{\psi(\lambda)}{\sqrt{5}}(\psi(\lambda)-\frac{28}{\sqrt{5}}) $ is   increasing as a function  of $\psi(\lambda) $ (not of $\lambda$) with its minimum value  $\frac{11}{\sqrt{5}}(11-\frac{28}{\sqrt{5}}) \approx -7.487 $. Thus $S(-\frac{1}{\sqrt{5}}) >7$ and we are done by the Intermediate value Theorem.

\item	 Recall from the previous lemma that $q(z) =4z^3-\frac{4z^2}{\sqrt{5-\lambda}}+(9+\frac{4}{5-\lambda})z+ 3 \sqrt{5-\lambda}$ and the real extraneous fixed point of $C_\lambda$ different from $-\frac{1}{\sqrt{5-\lambda}}$ is a root of $q$. As $q(-\frac{2}{\sqrt{5-\lambda}})= -\frac{1}{(5-\lambda )\sqrt{5-\lambda}} (71+12 \lambda -3 \lambda^2)$ and $71+12 \lambda-3 \lambda^2>0$ for all $\lambda  \leq 1$, we have  $q(-\frac{2}{\sqrt{5-\lambda}})<0$. Similarly,  $q(-\frac{1}{\sqrt{5-\lambda}})=  \frac{1}{(5-\lambda )\sqrt{5-\lambda}} (3(6-\lambda)(1-\lambda)) > 0$ for $-1 \leq \lambda < 1$. It is already observed in Lemma \ref{extra_psi}(2) that $q$ is strictly increasing on $\mathbb{R}$. Therefore $-\frac{2}{\sqrt{5-\lambda}} < \alpha_\lambda < -\frac{1}{\sqrt{5-\lambda}}$.
\par For $\lambda =1, \alpha_{\lambda}= -\frac{1}{\sqrt{5-\lambda}}$ (See Lemma~\ref{basic}).
	\end{enumerate} 
\end{proof}
\begin{Remark}
	Let $-1\leq \lambda \leq 1.$ Then
	
	\begin{enumerate}
		\item $p_{\lambda} (\frac{-3}{\sqrt{5-\lambda}})= \frac{3 \lambda^2 -30 \lambda +52}{(5-\lambda)(\sqrt{5-\lambda})}> 0$, $r_\lambda  < \frac{-3}{\sqrt{5-\lambda}} < -\frac{1}{\sqrt{5}}$, and
		\item $r_\lambda < -\frac{3}{\sqrt{5-\lambda}} < -\frac{2}{\sqrt{5-\lambda}}<\alpha_\lambda.$
	\end{enumerate}
\label{Root_of_p}
	\end{Remark}
\begin{figure}[h!]
	\begin{subfigure}{.5\textwidth}
		\centering
		\includegraphics[width=1.1\linewidth]{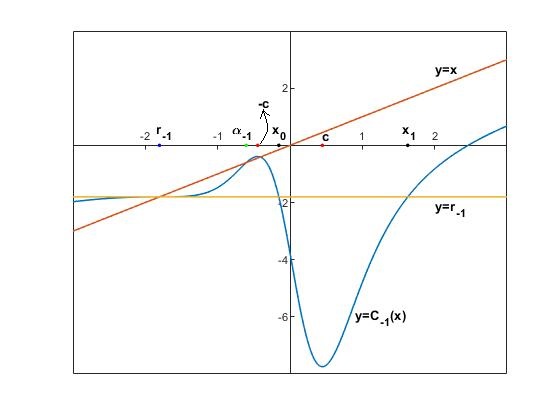}
		\caption{$\lambda = -1$}
	\end{subfigure}%
	\begin{subfigure}{.5\textwidth}
		\centering
		\includegraphics[width=1.1\linewidth]{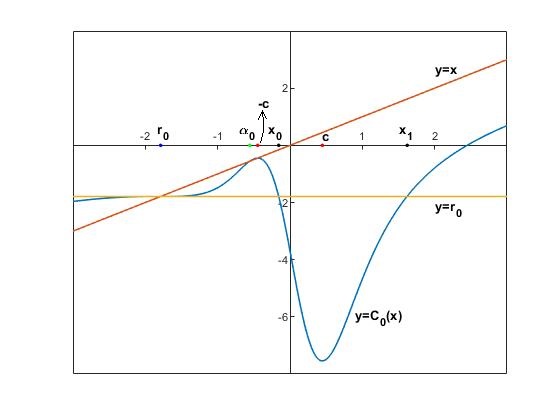}
		\caption{$\lambda =0$ }
		\end{subfigure}\\[1ex]
 \centering
    \begin{subfigure}{0.5\textwidth}
	    \centering
  	    \includegraphics[width=1.2\linewidth]{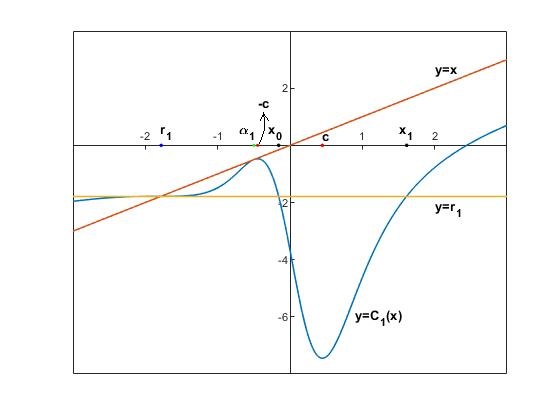}
	    \caption{$\lambda =1$}
    \end{subfigure}
	\caption{The fixed point and the preimages of $r_\lambda$ under $C_\lambda$ }
	\label{Graphs}
\end{figure}
Note that the real root of $p_\lambda$ is a superattracting fixed point of $C_\lambda$. The following lemma describes its immediate basin.
\begin{lemma}
	For $-1 \leq \lambda \leq 1,$ let $r_\lambda$ be the real root of $p_\lambda$. Then it is a super-attracting fixed point of $C_\lambda$ and its  immediate basin $\mathcal{A}_\lambda$  is unbounded. Further, it is simply connected and both the poles of $C_\lambda$ are on its boundary.
	\label{immediatebasin-realfixedpoint}
\end{lemma}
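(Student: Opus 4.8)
The plan is to verify the four assertions in turn---$r_\lambda$ is superattracting, $\mathcal{A}_\lambda$ is unbounded, $\mathcal{A}_\lambda$ is simply connected, and both poles $\pm i$ lie on $\partial\mathcal{A}_\lambda$---drawing throughout on the real-line dynamics of $C_\lambda$ and on the estimates already assembled. The first point is immediate: since $p_\lambda$ is generic, $r_\lambda$ is a simple root of $p_\lambda$ and hence a superattracting fixed point of $C_\lambda$, and by Lemma~\ref{basic}(4) it is in fact a critical point of multiplicity two.

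First I would prove unboundedness by a purely one-dimensional argument. The poles $\pm i$ are non-real, so $C_\lambda$ is real-analytic on $\mathbb{R}$, and Equation~(\ref{cheby-prime-parameter}) gives $C_\lambda'(x)>0$ for every $x<-\frac{1}{\sqrt5}$ with $x\neq r_\lambda$; as $r_\lambda<-1<-\frac{1}{\sqrt5}$, the map $C_\lambda$ is strictly increasing on $(-\infty,r_\lambda]$. By Remark~\ref{Root_of_p} and Lemma~\ref{real-extraneous} every real fixed point of $C_\lambda$ exceeds $r_\lambda$, so $(-\infty,r_\lambda)$ is free of fixed points; since $C_\lambda(x)\sim\frac{5}{9}x$ as $x\to-\infty$ gives $C_\lambda(x)-x>0$ for very negative $x$, the intermediate value theorem upgrades this to $C_\lambda(x)>x$ on all of $(-\infty,r_\lambda)$. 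Monotonicity then yields $x<C_\lambda(x)<r_\lambda$ there, so $C_\lambda^n(x)\to r_\lambda$ and $(-\infty,r_\lambda]\subset\mathcal{A}_\lambda$; in particular $\mathcal{A}_\lambda$ is unbounded.

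For simple connectivity I would argue directly through Riemann--Hurwitz, which has the advantage of not requiring one to track the two free critical points $\pm\frac{1}{\sqrt5}$. The immediate basin is invariant, $C_\lambda(\mathcal{A}_\lambda)=\mathcal{A}_\lambda$, and $C_\lambda|_{\mathcal{A}_\lambda}$ is proper of some degree $d\ge 3$, the bound coming from the local degree $3$ of $C_\lambda$ at the simple root $r_\lambda$. Because $r_\lambda\in\mathcal{A}_\lambda$ already contributes a critical point of multiplicity two, the ramification $C$ of Lemma~\ref{r-h} satisfies $C\ge 2$. Rewriting the formula as $(2-c(\mathcal{A}_\lambda))(d-1)=C$ with $d-1\ge 2$ and $C\ge 2$ forces $2-c(\mathcal{A}_\lambda)>0$, whence $c(\mathcal{A}_\lambda)=1$. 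Alternatively one could invoke Theorem~3.9 of \cite{Milnor}, but that route first demands showing that $\pm\frac{1}{\sqrt5}$---the only critical points besides $r_\lambda$ that are not manifestly elsewhere---do not lie in $\mathcal{A}_\lambda$, which is exactly where the estimates of Lemma~\ref{real-extraneous} on $C_\lambda(\frac{1}{\sqrt5})$ and on the boundary fixed point $\alpha_\lambda$ would be needed.

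Finally, both poles lie on the boundary. Since $\infty$ is a repelling fixed point of $C_\lambda$ and $\mathcal{A}_\lambda$ is an unbounded invariant immediate basin, Lemma~\ref{poleinboundary} places at least one pole of $C_\lambda$ on $\partial\mathcal{A}_\lambda$; as $\mathcal{A}_\lambda$ meets $\mathbb{R}$ it is symmetric about the real axis by Remark~\ref{5-6}(5), so $\partial\mathcal{A}_\lambda$ is too, and since the poles form the conjugate pair $\pm i$ the presence of one on the boundary forces the other. The step I expect to be most delicate is simple connectivity: the Riemann--Hurwitz shortcut is clean provided one is careful that $C_\lambda|_{\mathcal{A}_\lambda}$ genuinely has degree at least two and that $\mathcal{A}_\lambda$ is invariant, whereas the Milnor-coordinate alternative turns on the more demanding real-dynamics bookkeeping that confines $\pm\frac{1}{\sqrt5}$ to Fatou components other than $\mathcal{A}_\lambda$.
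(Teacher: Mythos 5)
Your handling of superattraction, unboundedness, and the poles-on-the-boundary step is sound and essentially the paper's (your asymptotic $C_\lambda(x)\sim\frac{5}{9}x$ plus the intermediate value theorem is a harmless variant of the paper's ``otherwise the orbit would tend to the repelling fixed point $\infty$'' argument, and the conjugation-symmetry argument for the second pole is exactly the paper's). The gap is in simple connectivity. The Riemann--Hurwitz shortcut $(2-c(\mathcal{A}_\lambda))(d-1)=C$ with $d-1\ge 2$ and $C\ge 2$ only excludes the \emph{finite} connectivities $2\le c(\mathcal{A}_\lambda)<\infty$; it says nothing when $c(\mathcal{A}_\lambda)=\infty$, because then both sides of $c(U)-2=d\,(c(V)-2)+C$ equal $-\infty$ and your rearrangement amounts to $\infty-\infty$. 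Infinitely connected invariant basins containing critical points and on which the map is proper of degree $\ge 2$ genuinely occur (the basin of $\infty$ for $z\mapsto z^{2}+c$ with a Cantor Julia set is the standard example), so the formula alone cannot force $c(\mathcal{A}_\lambda)=1$.

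The route you set aside as more demanding is in fact unavoidable, and it is where almost all of the paper's proof lives: one must show that the free critical point $\tfrac{1}{\sqrt5}$ does \emph{not} lie in $\mathcal{A}_\lambda$. The paper first places $-\tfrac{1}{\sqrt5}$ in the immediate basin of the attracting or parabolic extraneous fixed point $-\tfrac{1}{\sqrt{5-\lambda}}$ (that basin must contain a critical point and no other is available, since $C_\lambda(\tfrac{1}{\sqrt5})\in(-\infty,r_\lambda]\subset\mathcal{A}_\lambda$ and the poles lie in the Julia set), so that $\tfrac{1}{\sqrt5}$ is the only critical point besides $r_\lambda$ that could sit in $\mathcal{A}_\lambda$; it then rules this out by a lengthy contradiction argument using the B\"ottcher coordinate, the two real preimages $x_0,x_1$ of $r_\lambda$, and a nested sequence of simply connected pullbacks $W_n$ of degree $4$. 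Only after that does Theorem~9.3 of \cite{Milnor} give simple connectivity. As written, your proposal skips this core step, so the simple-connectivity claim is not established.
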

\begin{proof}
Clearly, the root $r_\lambda$ of $p_\lambda$ is simple and is a superattracting fixed point of $C_\lambda$. Note that $C_\lambda$ is strictly increasing in $(-\infty, -\frac{1}{\sqrt{5}})$, strictly decreasing in $(-\frac{1}{\sqrt{5}},  \frac{1}{\sqrt{5}})$ and strictly increasing thereafter.
Let $c=\frac{1}{\sqrt{5}}$.
Since $r_\lambda < -c$ (by Remark \ref{Root_of_p}(1)), $C_\lambda$ is strictly increasing in $(-\infty, r_\lambda]$. Further, by the preceeding remark, the only other real extraneous fixed point of $C_\lambda$ is greater than $r_\lambda$. Therefore,  for all $x \in (-\infty,r_\lambda)$, either $C_\lambda(x)<x$ or $C_\lambda(x)>x$. The first possibility leads to a strictly decreasing sequence $\{C_{\lambda}^n (x) \}_{n>0}$ which must converge to $-\infty$. But this is not possible as $\infty$ is a repelling fixed point of $C_\lambda$. Therefore  $C_\lambda (x)>x$ and  $C_\lambda ^n(x) \to r_\lambda$ as $n \to \infty$ for all $x \in (-\infty, r_\lambda] $ giving that  $(-\infty, r_\lambda] \subset \mathcal{A}_\lambda$.  In other words,  $\mathcal{A}_\lambda$  is unbounded.
\par It follows from Lemma~\ref{real-extraneous}(1) that the critical value $C_\lambda(c) \in \mathcal{A}_\lambda$.  As the extraneous fixed point $-\frac{1}{\sqrt{5-\lambda}}$ is either attracting or parabolic, its basin (attracting or parabolic) contains a critical point. But, the only available critical point  is $-c$. Therefore $-c$ is in the immediate basin of attraction or immediate parabolic basin of  $-\frac{1}{\sqrt{5-\lambda}}$. So $\mathcal{A}_\lambda$ contains at most one critical point other than $r_\lambda$ and that can  be $c$ only. We are going to show that this is not the case, i.e., $c \notin \mathcal{A}_\lambda$.
\par
Suppose on the contrary that $c \in \mathcal{A}_\lambda$. Then 
$C_\lambda ([x_0, c]) = [C_\lambda (c ), r_\lambda]$ and it gives that  $C_\lambda ([x_0, c]) \subset \mathcal{A}_\lambda$. Note that there is a real pre-image of $r_\lambda$ in $(c,\infty)$. Let it be $x_1$. Then  $C_\lambda$ maps $[c, x_1]$   onto the same interval $[C_\lambda (c ), r_\lambda]$ giving that $x_1 \in A_{\lambda}$ whenever $c \in A_\lambda$. The locations of $x_0, x_1, r_\lambda$ and the critical points  are shown in Figure~\ref{Graphs} for $\lambda=-1,0$ and $1$, where the red dots represent the critical points.
\par 
The Bottcher coordinate   $\phi$ is locally defined and univalent at $r_\lambda$ (See Theorem 9.3, \cite{Milnor}), i.e., there is a simply connected domain $\tilde{U} \subseteq \mathcal{A}_{\lambda}$  such that $\phi: \tilde{U} \to \phi(\tilde{U}) \subseteq \{z: |z|<1\}$ is conformal. Since the critical point $c$ is assumed to be in $\mathcal{A}_\lambda, \phi$ cannot be extended conformally to the whole of $\mathcal{A}_\lambda$. In other words, there is a maximal $r \in (0,1)$ such that $\phi^{-1}$ is well-defined on $D_r =\{z: |z|<r\}$. Let $U=\phi^{-1}(D_r)$. Clearly $c \in \partial U$. Let $V=C_\lambda (U)$. Then $\overline{V} \subset U$  because $\phi(U)=D_r$ and $\phi \circ C_\lambda \circ \phi^{-1}(z)=z^3$ on $D_r$ (by Theorem 9.3,~\cite{Milnor}), and $\overline{D_{r^3}} \subset D_r$. It also follows that $C_\lambda: U \to V$ is a proper map of degree three. 
Since $\phi^{-1}$ is well-defined and conformal on $D_r \supsetneq D_{r^3}$,  $\phi^{-1}(D_{r^3})=V$ is a Jordan domain. Let $\gamma =\partial V \setminus \{C_\lambda (c)\}$. As the local degree of $C_\lambda$ at $c$ is two, there are two branches of $C_{\lambda}^{-1}$ and each is well-defined on $\gamma$ by the Monodromy theorem. Since there is no critical value of $C_\lambda$ on $\gamma$, the images of $\gamma$ under each of these branches are Jordan arcs. Let these images be $\sigma$ and $\sigma'$. Then $\sigma \cap \sigma' =\emptyset$ and each of $\overline{\sigma}$ and $\overline{\sigma'}$ is a Jordan curve  with  $\overline{\sigma} \cap \overline{\sigma'}=\{c\}$. In fact, the bounded components  of $\widehat{\mathbb{C}} \setminus \overline{\sigma}$ and $\widehat{\mathbb{C}} \setminus \overline{\sigma'}$ are the images of $V$ under the two branches of $C_{\lambda}^{-1}$. This is because the unbounded components of $\widehat{\mathbb{C}} \setminus \overline{\sigma}$ and $\widehat{\mathbb{C}} \setminus \overline{\sigma'}$  contains a point of the Julia set of $C_\lambda$, namely $\infty$ and therefore no such unbounded component can be mapped into $V$, which is in the Fatou set of $C_\lambda$.  Clearly, these complementary bounded components are disjoint. One of these must be $U$.  Assume without loss of generality that $U$ is the bounded component of $\widehat{\mathbb{C}} \setminus \overline{\sigma}$. The possible figures of $U$ and $U'$ is given in the lefthand side image of  Figure~\ref{U_pic}.

\begin{figure}[h!]
 
		\centering
		\includegraphics[width=0.46\linewidth]{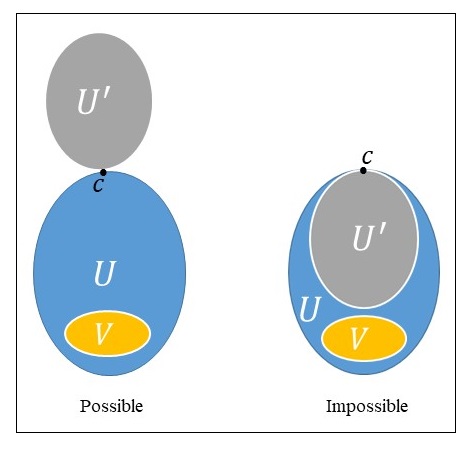}
		\caption{The possible position of $U$ and $U'$.}
		 \label{U_pic}
\end{figure} 
 Let $U'$ be the bounded component of $\widehat{\mathbb{C}} \setminus \overline{\sigma'}$.
%
Now $r_\lambda \in U$ and $U'$ contains a pre-image, say $x^*$ of $r_\lambda$ such that $x^* \neq r_{\lambda}$.

 Now consider a simply connected open set $W_0$ containing the closure of $U \cup U'$ and let $W_1$ be the component of $C_{\lambda}^{-1}(W_0)$ containing $r_\lambda$. Then $C_\lambda : W_1 \to W_0$ is a proper map with some  degree $d$. Clearly $d \geq 4$ as there are at least four pre-images of $r_{\lambda}$ in $W_1$ counting multiplicity, namely $r_\lambda$ itself with multiplicity $3$ and $x^*$ with multiplicity $1$. Since  $C_\lambda$ is a proper map (of  degree $7$) from $\widehat{\mathbb{C}}$ onto itself and  $W_1 \neq \widehat{\mathbb{C}}$, the connectivity of $W_1$ is non-zero and finite. In fact, each component of $\widehat{\mathbb{C}} \setminus W_1$ is mapped onto $\widehat{\mathbb{C}} \setminus W_0$ and there cannot be more than seven such components. Since $W_1$ contains two critical points, namely $r_\lambda$ with multiplicity $2$ and $c$ with multiplicity $1$, it follows from the Riemann-Hurwitz formula (Lemma~\ref{r-h}) that  $c(W_1)-2= d(c(W_0)-2)+3=3-d$. This  gives that the connectivity of $W_1 $ is less than or equal to $ 1$. Thus $W_1$ is simply connected and $d=4$. Note that $W_1$ contains only one pre-image of $r_\lambda$ different from itself and this must be $x^*$. Applying this argument again we get that $C_\lambda: W_2 \to W_1$ is a proper map of degree $4$, $W_2$ is simply connected and $x^*$  is the only pre-image of $r_\lambda$ different from itself, belonging to $W_2$ where $W_2$ is the component of $C_{\lambda}^{-1}(W_1)$ containing $r_\lambda$. It follows by induction that for each $n \geq 1$, if  $W_{n}$ is the component of $C_{\lambda}^{-1}(W_{n-1})$ containing $r_\lambda$ then $C_\lambda: W_{n} \to W_{n-1}$ is a proper map of degree $4$, $W_{n}$ is simply connected and $x^*$ is the only pre-image of $r_\lambda$, different from $r_\lambda$ belonging to  $W_{n}$.
 
 \par Since $x_0, x_1 \in \mathcal{A}_\lambda$, one of them, say $x_1$ is different from $x^*$. Thus $x_1 \notin W_{n}$ for any $n$. Consider an arc in $\mathcal{A}_\lambda$ joining $r_\lambda$ with $x_1$.  This arc cannot be contained in $W_n$ and  intersects its boundary for each $n$. Let $w_n$ be a point of such intersection. Then $w_n$ has an accumulation point, say $w$ in $\mathcal{A}_\lambda$. Considering a sufficiently small neighborhood $N_w$ of $w$ contained in $\mathcal{A}_\lambda$ we observe that $C_\lambda ^n (N_w)$ intersects the boundary of $W_0$ for all sufficiently large $n$. However, there is an $n_0$ such that $C_\lambda ^n (N_w) \subset V \subsetneq \overline{V} \subset W_0$ for all $n >n_0$.   This is a contradiction and we prove that $c \notin A_\lambda$.
 \par  It now follows from a well-known result (Theorem 9.3, \cite{Milnor}) that $\mathcal{A}_\lambda$ is simply connected. 
\par 
By Lemma~\ref{poleinboundary}, there is a pole of $C_\lambda $ on the boundary of $\mathcal{A}_{\lambda}$. The Fatou component $\mathcal{A}_{\lambda} $ is symmetric about the real line by Remark~\ref{5-6}~(\ref{lambda<5}). Since the poles are complex conjugates of each other, the other pole is also on the boundary of $\mathcal{A}_\lambda$. 

\end{proof}

We   now  present the proof of  Theorem \ref{connected_J_set}.
\begin{proof}[Proof of Theorem \ref{connected_J_set}]
Note that $C_\lambda$ has a fixed point at $\infty$ and that is repelling. It follows from Lemma~\ref{immediatebasin-realfixedpoint} that the immediate basin $\mathcal{A}_{\lambda}$ of the real superattracting fixed point of $C_\lambda$ corresponding to the real root of $p_\lambda$  is unbounded, simply connected and contains both the poles of $C_\lambda$ on its boundary.  Now it follows from Lemma~\ref{poleinboundary} that  the Julia set of $C_\lambda$ is connected.
	
 \end{proof}
  
\section{Concluding remarks}
For $C_{\lambda}, \lambda \in [-1,1]$, all the critical points except the  poles are in the attracting or parabolic basins. In fact, the Fatou set of $C_\lambda$ is the union of the   basins of the three superattracting fixed points corresponding to the three roots of $p_\lambda$ and the basin of the extraneous fixed point (which is parabolic for $\lambda =\pm 1$ and attracting otherwise). In particular, the Fatou set of $C_\lambda$ does not contain any Siegel disk or any Herman ring.

It is observed from the graph of $\psi(\lambda)$ (Figure~\ref{psi-lambda}) that for each $\lambda \in [-1,1)$ there is a $\lambda' \in (1, \delta]$ where $\delta$ is the positive number  satisfying  $\psi(\delta)=\psi(-1)$ such that $\psi(\lambda)=\psi(\lambda')$. This gives that $p_{\lambda}=p_{\lambda'}$ and consequently, $C_{\lambda}=C_{\lambda'}$. Thus Theorem~\ref{connected} is true for all $\lambda \in [-1,\delta]$.
In terms of the real extraneous fixed points it means  the following. For $\lambda \in [-1,1)$, the multiplier of $-\frac{1}{\sqrt{5-\lambda}}$ is $\lambda$ whereas the multiplier of the second real extraneous fixed point $\alpha_\lambda$ is in $(1, \delta)$ and hence is repelling. For $\lambda \in (1, \delta)$, the extraneous fixed point  $-\frac{1}{\sqrt{5-\lambda}}$ becomes repelling making $\alpha_\lambda $ attracting.
\par 
 For $\lambda \in [-1,1]$, the forward orbits of the critical points, $\pm \frac{1}{\sqrt{5}}$ remains in the real line. This along with Lemma~\ref{basic-dynamics-lemma} gives that the  two non-real extraneous fixed points cannot be attracting or parabolic. These are in fact, repelling. To see it, recall that each extraneous fixed point other than $-\frac{1}{\sqrt{5-\lambda}}$ is a solution  of
 $q(z)=4z^3-\frac{4}{\sqrt{5-\lambda}}z^2+\frac{49-9\lambda}{5-\lambda}z+3\sqrt{5-\lambda}=0.$ 
Among them one, namely  $\alpha_\lambda$ is already known to be real and other two say, $ \zeta $ and $ \bar{\zeta} $ are complex conjugates of each other. Since $q(z)=4 (z-\alpha_\lambda)(z-\zeta)(z-\overline{\zeta})$, comparing the constant terms we get,
  $ \alpha_\lambda \zeta \bar{\zeta}=-\frac{3\sqrt{5-\lambda}}{4}. $ In other word,  $|\zeta|^2=-\frac{3\sqrt{5-\lambda}}{4\alpha_\lambda}$.  Since $-\frac{2}{\sqrt{5-\lambda}}<\alpha_\lambda<-\frac{1}{\sqrt{5-\lambda}}$ (by Lemma~\ref{real-extraneous} (2) ), we have $\frac{3(5-\lambda)}{8}<-\frac{3\sqrt{5-\lambda}}{4\alpha_\lambda}<\frac{3(5-\lambda)}{4}$. Consequently,  $\frac{3}{2}<|\zeta|^2<\frac{9}{2}$ and  $ \frac{13}{3}<5-\frac{1}{|\zeta|^2}<\frac{43}{9} $. Recall that $|\lambda_{\bar{\zeta}}|=|\lambda_\zeta|$ and this is $|5-\frac{1}{\zeta^2}|\geq 5-\frac{1}{|\zeta|^2}>\frac{13}{3}$. 
      
We conclude by presenting several problems for further investigation.
\begin{enumerate}
	\item The Julia set of the Chebyshev's method  applied to a cubic polynomial with an attracting extraneous fixed point (with non-real multiplier) may be connected. But the arguments used in this article seems to be insufficient to verify it. 
	\item  The images in Figure~\ref{Cheby-$-1$}, Figure~\ref{Cheby-$0$} and Figure~\ref{Cheby-$1$}  suggest that the attracting/parabolic domain  corresponding to the  extraneous attracting/parabolic fixed point is bounded. However, this is yet to be proved.
	
	\item We believe that all the immediate basins of attractions of the superattracting fixed points corresponding to the roots of $p_\lambda$ are unbounded for $\lambda \in [-1,1]$. This article  proves it only for the real root of  $p_\lambda$.
		\item  A rational map is called geometrically finite if the postcritical set $\overline{\mathcal{P}_R} \cap \mathcal{J}(R)$ is finite where $\mathcal{P}_R$ is the union of all the forward orbits of all the critical points of $R$.	It follows from the proof of Theorem \ref{connected_J_set} that  $C_\lambda$ is geometrically finite for $\lambda \in [-1,1]$. It is also clear from Proposition~\ref{unicritical-nongeneric} that $C_p$ is geometrically finite for all cubic unicritical and non-generic polynomials.  Since the Julia set is connected in each of these cases, it is locally connected by ~\cite{lei-96}. The nature of the boundaries of the Fatou components can be explored.
	\item For $\lambda \in (-\infty, -1) \cup (\delta,5)$, $\psi(\lambda)$ is a real number and  $p_\lambda$ preserves the real line, and it has a unique real root. The dynamics of $C_\lambda$ is symmetric about the real line by Remark~\ref{5-6}(\ref{lambda<5}). The forward orbits of the two real critical points of $C_\lambda$ remain in the real line. It  seems plausible to analyze these forward orbits and determine the dynamics of $C_\lambda$.
\end{enumerate} 
 \section*{Acknowledgement}
 The second author is supported by the University Grants Commission, Govt. of India.

\end{document}